\newtheorem{theorem}{Theorem}[section]
\newtheorem{proposition}{Proposition}[section]
\theoremstyle{definition}
\newtheorem{example}[theorem]{Example}
\theoremstyle{remark}
\newtheorem{remark}[theorem]{Remark}
\numberwithin{equation}{section}
\subjclass{}
\keywords{Quasilinear singular system, p-Laplacian, multiplicity, nodal solution, sub-supersolution, Leray–Schauder topological degree}
\begin{document}
\title[Multiple solutions for singular quasilinear elliptic systems]{%
Constant sign and nodal solutions for singular quasilinear elliptic systems}
\subjclass[2020]{35J62, 35J92, 35B09, 35B99}
\keywords{Quasilinear singular system, p-Laplacian, multiplicity, nodal
solution, sub-supersolution, Leray-Schauder topological degree}

\begin{abstract}
We consider singular quasilinear elliptic systems with homogeneous Dirichlet
boundary condition. Using Leray-Schauder topological degree, combined with
the sub-supersolutions method and suitable truncation arguments, we
establish the existence of at least three nontrivial solutions, two of which
are of opposite constant sign. The third solution is nodal and exhibits
components of at least opposite constant sign. In the case of a sign-coupled
system, these components are of changing and synchronized sign.
\end{abstract}

\author{Nouredine Medjoudj\textsuperscript{$\star$}}
\address{\textsuperscript{$\star$} Applied Mathematics Laboratory, Faculty
of Exact Sciences,\\
A. Mira Bejaia University, Targa Ouzemour, 06000 Bejaia, Algeria}
\author{Abdelkrim Moussaoui\textsuperscript{$\star, \dagger$}}
\address{$\dagger$ Department of Physico-Chemical Biology, Faculty of
Natural and Life Sciences,\\
A. Mira Bejaia University, Algeria}
\email{nouredine.medjoudj@univ-bejaia.dz}
\email{abdelkrim.moussaoui@univ-bejaia.dz}
\maketitle

\section{Introduction}

\label{S1}

Let $\Omega $ is a bounded domain in $%
%TCIMACRO{\U{211d} }%
%BeginExpansion
\mathbb{R}
%EndExpansion
^{N}$ $\left( N\geq 2\right) $ with a smooth boundary $\partial \Omega $. We
consider the following system of quasilinear elliptic equations%
\begin{equation*}
(\mathrm{P})\qquad \left\{ 
\begin{array}{ll}
-\Delta _{p_{1}}u_{1}=f_{1}(x,u_{1},u_{2}) & \text{in }\Omega \\ 
-\Delta _{p_{2}}u_{2}=f_{2}(x,u_{1},u_{2}) & \text{in }\Omega \\ 
u_{1},u_{2}=0\text{ } & \text{on }\partial \Omega ,%
\end{array}%
\right.
\end{equation*}%
where $1<p_{i}<\infty $ and $\Delta _{p_{i}}$ stands for the $p_{i}$%
-Laplacian differential operator defined by $-\Delta
_{p_{i}}u_{i}=-div(|\nabla u_{i}|^{p_{i}-2}\nabla u_{i}),$ for $u_{i}\in
W_{0}^{1,p_{i}}(\Omega ).$ By a solution of problem $(\mathrm{P})$ we mean $%
(u_{1},u_{2})\in W_{0}^{1,p_{1}}\left( \Omega \right) \times
W_{0}^{1,p_{2}}\left( \Omega \right) $ such that 
\begin{equation*}
\left\{ 
\begin{array}{ll}
\int_{\Omega }\left\vert \nabla u_{1}\right\vert ^{p_{1}-2}\nabla
u_{1}\nabla \varphi _{1}\ \mathrm{d}x & =\int_{\Omega
}f_{1}(x,u_{1},u_{2})\varphi _{1}\ \mathrm{d}x, \\ 
\int_{\Omega }\left\vert \nabla u_{2}\right\vert ^{p_{2}-2}\nabla
u_{2}\nabla \varphi _{2}\ \mathrm{d}x & =\int_{\Omega
}f_{2}(x,u_{1},u_{2})\varphi _{2}\ \mathrm{d}x,%
\end{array}%
\right.
\end{equation*}%
for all $\varphi _{i}\in W_{0}^{1,p_{i}}(\Omega ),\ i=1,2,$ provided the
integrals in the right-hand side of the above identities exist. The
nonlinear reaction terms $f_{1}(x,u_{1},u_{2})$ and $f_{2}(x,u_{1},u_{2})$
are Carath\'{e}odory functions that exhibit a singularity that, without loss
of generality, is located at zero. This occurs under the following
hypothesis on the nonlinearities $f_{i}(x,s_{1},s_{2})$ ($i=1,2$) reflecting
the singular character of problem $(\mathrm{P})$:

\begin{description}
\item[$(\mathrm{H}_{1})$] 
\begin{equation*}
\begin{array}{l}
\lim_{s_{i}\rightarrow 0^{\pm }}f_{i}(x,s_{1},s_{2})=+\infty ,\text{ \
uniformly for a.e. }x\in \Omega \text{, for all }s_{j}>0, \\ 
\lim_{s_{i}\rightarrow 0^{\pm }}f_{i}(x,s_{1},s_{2})=-\infty ,\text{ \
uniformly for a.e. }x\in \Omega \text{, for all }s_{j}<0,%
\end{array}%
\end{equation*}%
\ for $i,j=1,2,i\neq j.$
\end{description}

Singularities are involved in a wide range of important elliptic problems
which have been subject to intensive study in recent years, see for instance 
\cite{AM, AMT, DM, DM1, DM2, 3, GHM, GST2, KM, MMM, MM3, MM2, MM1, M, M2,
M3, M4, MKT, MV2} and there references. Their impact on the structure of the
problem prevents the direct application of conventional nonlinear analysis
techniques, including sub-supersolutions and fixed point methods. Moreover,
singularities make difficult any study attendant to sign properties of
solutions, particularly those that change sign, as they inevitably pass
through zero. These type of solutions adhere to the concept of nodal
solutions, which are defined as neither positive nor negative. They are
necessarily sign changing functions in the scalar case while in the context
of systems, they incorporate several types of solutions depending on the
sign of their components. By this definition, a solution of a system whose
components at least are not of the same constant sign is nodal. As a
reminder, a solution is said to be positive (resp. negative) if its
components are positive (resp. negative).

From our current knowledge, the question of nodal solutions for singular
systems has been rarely investigated in the literature. Actually, \cite{M4}
established the existence of a nodal solution for singular Gierer-Meinhardt
type system whose components are sign-changing and synchronized. In \cite{M}%
, a solution exhibiting analogous characteristics is investigated for a
class of singular semilinear elliptic systems while, the quasilinear case
was examined for singular Lane-Emden type systems in \cite{M3}. The latter
establishes that a sign-changing and synchronized nodal solution exists,
provided that the system is sign-coupled and subject to a strong singularity
effect. We point out that the aforementioned issues are readily noticeable
for the system $(\mathrm{P})$ which, under assumption $(\mathrm{H}_{1})$,
exhibits singularities near the origin. Additionally, the absence of a
variational structure further complicates its examination because
variational techniques are inapplicable. (see the growth conditions $(%
\mathrm{H}_{2})$-$(\mathrm{H}_{4})$ below).

Our main interest in this work is to establish multiplicity result for
singular system $(\mathrm{P})$ with a precise sign information. In this
respect, we first establish the existence of opposite constant-sign
solutions to system $(\mathrm{P})$. Specifically, in addition to $(\mathrm{H}%
_{1})$, we assume:

\begin{description}
\item[$(\mathrm{H}_{2})$] \textit{There exist constants }$M_{i}>0,$\textit{\
and }$\alpha _{i},\beta _{i}\in 
%TCIMACRO{\U{211d} }%
%BeginExpansion
\mathbb{R}
%EndExpansion
\backslash \{0\}$\textit{\ such that} 
\begin{equation*}
-1<\alpha _{1},\beta _{2}<0\text{, \ }|\beta _{1}|<\min \left\{
1,p_{1}-1\right\} ,\text{ \ \ }|\alpha _{2}|<\min \left\{ 1,p_{2}-1\right\} ,
\end{equation*}
\begin{equation*}
f_{i}(x,s,t)\leq M_{i}(1+s^{\alpha _{i}}+t^{\beta _{i}})\text{ \ and \ }
f_{i}(x,-s,-t)\geq -M_{i}(1+s^{\alpha _{i}}+t^{\beta _{i}}),
\end{equation*}
\ for a.e\textit{. }$x\in \Omega ,$ all $s,t>0,$ for $i=1,2.$

\item[$(\mathrm{H}_{3})$] \textit{There exist constants }$m_{i}>0$\textit{\
and }$\hat{\alpha}_{i},\hat{\beta}_{i}\in 
%TCIMACRO{\U{211d} }%
%BeginExpansion
\mathbb{R}
%EndExpansion
\backslash \{0\}$\textit{\ such that} 
\begin{equation*}
\alpha _{i}\geq \hat{\alpha}_{i},\ \beta _{i}\geq \hat{\beta}_{i}\;\text{and 
}\ \hat{\alpha}_{i}+\hat{\beta}_{i}>-\min \left\{ 1,p_{i}-1\right\} ,
\end{equation*}
\begin{equation*}
f_{i}(x,s,t)\geq m_{i}(s^{\hat{\alpha}_{i}}+t^{\hat{\beta}_{i}})\text{ \ and
\ }f_{i}(x,-s,-t)\leq -m_{i}(s^{\hat{\alpha}_{i}}+t^{\hat{\beta}_{i}}),
\end{equation*}
for a.e\textit{. }$x\in \Omega ,$ all $s,t>0,$ for $i=1,2.$
\end{description}

These assumptions suffice to show the existence of a positive solution $%
(u_{1,+},u_{2,+})$ and negative solution $(u_{1,-},u_{2,-})$ to system $(%
\mathrm{P})$. It is stated in Theorem \ref{T1}.\ The proof is achieved
through the sub-supersolutions theorem for singular systems in \cite[Theorem
2]{KM}. It was established that the aforementioned constant-sign solutions
are located in positive and negative rectangles formed by two opposite
constant sign sub-supersolutions pairs. The latter, which provide a
localization of these two solutions, are constructed by a choice of suitable
functions with an adjustment of adequate constants. It is of considerable
consequence to note that this choice is particularly relevant in regard to
the properties characterizing solutions of constant sign. Indeed, exploiting
the characteristic of the sub-supersolutions pairs, it is established that
every constant-sign solution enclosed within the rectangle formed by the
opposite supersolutions is inevitably outside the rectangle formed by the
opposite sign subsolutions. This is a fundamental point that sets the stage
for our second primary goal which represents a meaningful aspect of our
results. It is a matter of showing the existence of another nontrivial
solution to system $(\mathrm{P})$ that exhibits the additional property of
being nodal. We assume the following assumption:

\begin{description}
\item[$(\mathrm{H}_{4})$] \textit{There exist constants }$\eta _{1},\eta
_{2}>0$ \textit{such that} 
\begin{equation*}
|f_{i}(x,s,t)|\leq M_{i}(1+|s|^{\alpha _{i}}+|t|^{\beta _{i}}),
\end{equation*}%
\ for a.e\textit{. }$x\in \Omega ,$ for all $(s,t)\in \lbrack -\eta
_{1},\eta _{1}]\times \lbrack -\eta _{2},\eta _{2}],$ $s,t\neq 0,$ $i=1,2.$
\end{description}

The third solution to $(\mathrm{P})$ subjected to assumption $(\mathrm{H}%
_{4})$ is provided by Theorem \ref{T2}. The proof is based on topological
degree theory. It is applied to a regularized system $(\mathrm{P}%
^{\varepsilon })$, depending on parameter $\varepsilon >0$, whose study is
relevant for problem $(\mathrm{P})$. By virtue of a priori estimates derived
from suitable truncation depending on $\varepsilon >0$, two balls are
defined in Sobolev space $W_{0}^{1,p_{1}}(\Omega )\times
W_{0}^{1,p_{2}}(\Omega )$ such that one fits strictly into the other. The
larger ball encompasses all potential solutions, whereas the smaller ball
focuses on the area between the rectangles with opposite signs, defined by
the aforementioned opposite constant sign sub-supersolutions pairs. A
specific and crucial choice of homotopic functions corresponding to $(%
\mathrm{P}^{\varepsilon })$ ensures that on the large ball, the degree is $0$
while it is $1$ when the smaller ball is excluded. Then, the domain
additivity property of the Leray-Schauder degree implies that the degree on
the smallest ball is not zero, thereby showing that the latter contains at
least a nontrivial solution to problem $(\mathrm{P}^{\varepsilon })$. Then,
through a priori estimates and dominated convergence Theorem, we may pass to
the limit as $\varepsilon \rightarrow 0$ in $(\mathrm{P}_{\varepsilon })$.
This leads to a solution $(u_{1}^{\ast },u_{2}^{\ast })$ of $(\mathrm{P})$
which, according to its localization, does not coincide with the above
solutions $(u_{1,+},u_{2,+})$ and $(u_{1,-},u_{2,-})$. Thus, $(u_{1}^{\ast
},u_{2}^{\ast })$ is a third solution of $(\mathrm{P})$. In light of the
preceding property that characterizes constant sign solutions, which asserts
that such solutions are inevitably exterior to the smallest ball, $%
(u_{1}^{\ast },u_{2}^{\ast })$ is nodal. Namely, both components $%
u_{1}^{\ast }$ and $u_{2}^{\ast }$ are nontrivial and at least are not of
the same constant sign, neither positive, nor negative. It should be noted
that the process for returning to the original problem $(\mathrm{P})$,
resulting from the passage to the limit in $(\mathrm{P}_{\varepsilon })$ as $%
\varepsilon \rightarrow 0$, represents a significant part of the argument.
The control near the singularity of all the terms involved in the problem is
crucial and it requires a specific truncation.

Another important issue that is rarely addressed in the existing literature
is the question of nodal solutions $(u_{1}^{\ast },u_{2}^{\ast })$ for $(%
\mathrm{P})$ where both components $u_{1}^{\ast }$ and $u_{2}^{\ast }$ are
sign changing. Such solutions are studied for specific singular problems in 
\cite{M, M3, M4}. Our result, stated in Theorem \ref{T4}, shows that $%
u_{1}^{\ast }$ and $u_{2}^{\ast }$ change sign simultaneously under the
further assumption:

\begin{description}
\item[$(\mathrm{H}_{5})$] $f_{i}(x,s_{1},s_{2})\cdot sgn(s_{j})>0,$ for a.e. 
$x\in \Omega ,$ for all $s_{i},s_{j}\neq 0,$ for $i,j=1,2,i\neq j$.
\end{description}

Hypothesis $(\mathrm{H}_{5})$ reflects the fact that system $(\mathrm{P})$
is of sign-coupled. This means that the nonlinearities $f_{1}(x,u_{1},u_{2})$
and $f_{2}(x,u_{1},u_{2})$ must have the same sign as $u_{2}$ and $u_{1}$,
respectively. We emphasize that our result extends that of \cite{M3, M4},
established for Lane-Emden and Gierer-Meinhardt type problems, to a more
general quasilinear singular system $(\mathrm{P})$.

The rest of the paper is organized as follows. Section \ref{S2} deals with
the existence of constant-sign solutions for the system $(\mathrm{P}),$
while section \ref{S4} provides nodal solutions.

\section{Opposite constant sign solutions}

\label{S2}

In the sequel, the Banach spaces $W^{1,p}(\Omega )$ and $L^{p}(\Omega )$ are
equipped with the usual norms $\Vert \cdot \Vert _{1,p}$ and $\Vert \cdot
\Vert _{p}$, respectively, whereas the space $W_{0}^{1,p}(\Omega )$ is
endowed with the equivalent norm $\Vert \nabla u\Vert _{p}=(\int_{\Omega
}|\nabla u|^{p}\,\mathrm{d}x)^{\frac{1}{p}}$. We also utilize the H\"{o}lder
spaces $\mathcal{C}^{1}(\overline{\Omega })$ and $\mathcal{C}^{1,\tau }(%
\overline{\Omega }),$ $\tau \in (0,1)$. Hereafter, we denote by $d(x)$ the
distance from a point $x\in \overline{\Omega }$ to the boundary $\partial
\Omega $, where $\overline{\Omega }=\Omega \cup \partial \Omega $ is the
closure of $\Omega \subset 
%TCIMACRO{\U{211d} }%
%BeginExpansion
\mathbb{R}
%EndExpansion
^{N}$. For $u,v\in \mathcal{C}^{1}(\overline{\Omega })$, the notation $u\ll
v $ means that $u(x)<v(x),\,\forall x\in \Omega ,$ and $\frac{\partial v}{%
\partial \eta }<\frac{\partial u}{\partial \eta }$ on $\partial \Omega .$

Let $\phi _{1,p_{i}}$ be the positive eigenfunction associated with the
principal eigenvalue $\lambda _{1,p_{i}}$ of $-\Delta _{p_{i}}$, that is,%
\begin{equation}
\begin{array}{c}
-\Delta _{p_{i}}\phi _{1,p_{i}}=\lambda _{1,p_{i}}|\phi
_{1,p_{i}}|^{p_{i}-2}\phi _{1,p_{i}}\text{ \ in }\Omega ,\text{ \ }\phi
_{1,p_{i}}=0\text{ \ on }\partial \Omega .%
\end{array}
\label{1}
\end{equation}%
We recall that there exists a constant $c_{0}>1$ such that%
\begin{equation}
c_{0}d(x)\geq \phi _{1,p_{i}}(x)\geq c_{0}^{-1}d(x),\text{ for all }x\in
\Omega .  \label{2}
\end{equation}%
Moreover, the eigenvalue $\lambda _{1,p_{i}}$ is characterized by 
\begin{equation}
\lambda _{1,p_{i}}=\inf_{u_{i}\in W_{0}^{1,p_{i}}(\Omega )\backslash \{0\}} 
\frac{\int_{\Omega }|\nabla u_{i}|^{p_{i}}\,\mathrm{d}x}{\int_{\Omega
}|u_{i}|^{p_{i}}\,\mathrm{d}x}.  \label{3}
\end{equation}

Inspired by \cite{DM}, the Dirichlet problems 
\begin{equation}
-\Delta _{p_{i}}y_{i}(x)=1+d(x)^{\alpha _{i}}+d(x)^{\beta _{i}}\text{ in }%
\Omega ,\text{ \ }y_{i}=0\text{ on }\partial \Omega ,  \label{4}
\end{equation}%
\begin{equation}
-\Delta _{p_{i}}z_{i}(x)=\left\{ 
\begin{array}{ll}
d(x)^{\hat{\alpha}_{i}}+d(x)^{\hat{\beta}_{i}} & \text{in \ }\Omega
\backslash \overline{\Omega }_{\delta }, \\ 
-1 & \text{in \ }\Omega _{\delta },%
\end{array}%
\right. ,\text{ \ }z_{i}=0\text{ \ on }\partial \Omega ,  \label{5}
\end{equation}%
have a unique positive solutions $y_{i},z_{i}\in \mathcal{C}^{1}\left( 
\overline{\Omega }\right) $ such that 
\begin{equation}
c^{-1}d(x)\leq z_{i}(x)\leq y_{i}(x)\leq cd(x)\text{ in }\Omega ,  \label{6}
\end{equation}%
where $c>1$ is a constant and $\Omega _{\delta }=\left\{ x\in \Omega
:d(x)<\delta \right\} ,$ with a fixed $\delta >0$ sufficiently small.

Fix a constant $C>1$ and set%
\begin{equation}
\underline{u}_{i}=C^{-1}z_{i}\text{ \ and \ }\overline{u}_{i}=Cy_{i},\text{ }%
i=1,2.  \label{7}
\end{equation}%
Due to (\ref{6}), this gives $\overline{u}_{i}\geq \underline{u}_{i}\ $ in $%
\ \overline{\Omega },$ so we can consider the ordered interval 
\begin{equation*}
\left[ \underline{u}_{i},\overline{u}_{i}\right] =\{v\in
W_{0}^{1,p_{i}}(\Omega ):\underline{u}_{i}(x)\leq v(x)\leq \overline{u}%
_{i}(x)\ \text{a.e.}\ x\in \Omega \}.
\end{equation*}%
Moreover, on account of (\ref{2}), (\ref{6}), (\ref{7}) and for $C>1$ large
enough, it holds%
\begin{equation}
\phi _{1,p_{i}}(x)\geq \underline{u}_{i}(x)\text{ for all }x\in \Omega .
\label{8}
\end{equation}%
Our first result deals with constant-sign solutions and is presented in the
following statement.

\begin{theorem}
\label{T1}Assume that $(\mathrm{H}_{2})$ and $(\mathrm{H}_{3})$ hold. Then,
problem $(\mathrm{P})$ admits two opposite constant-sign solutions $%
(u_{1,+},u_{2,+})$ and $(u_{1,-},u_{2,-})$ in $\mathcal{C}^{1,\tau }( 
\overline{\Omega })\times \mathcal{C}^{1,\tau }(\overline{\Omega }),$ for
certain $\tau \in (0,1).$ Moreover, every positive solution $%
(u_{1,+},u_{2,+})\in \mathcal{C}^{1}(\overline{\Omega })\times \mathcal{C}
^{1}(\overline{\Omega })$ and negative solution $(u_{1,-},u_{2,-})\in 
\mathcal{C}^{1}(\overline{\Omega })\times \mathcal{C}^{1}(\overline{\Omega }
) $ of $(\mathrm{P})$ within $[0,\overline{u}_{1}]\times \lbrack 0,\overline{
u}_{2}]$ and $[-\overline{u}_{1},0]\times \lbrack -\overline{u}_{2},0],$
respectively, satisfy 
\begin{equation}
u_{i,+}(x)\gg \underline{u}_{i}(x)\text{ \ and \ }u_{i,-}(x)\ll -\underline{%
u }_{i}(x),\text{ \ }\forall x\in \Omega .  \label{9}
\end{equation}
In particular, every positive solution $(u_{1,+},u_{2,+})\in
W_{0}^{1,p_{1}}(\Omega )\times W_{0}^{1,p_{2}}(\Omega )$ and negative
solution $(u_{1,-},u_{2,-})\in W_{0}^{1,p_{1}}(\Omega )\times
W_{0}^{1,p_{2}}(\Omega )$ of $(\mathrm{P})$ within $[0,\overline{u}
_{1}]\times \lbrack 0,\overline{u}_{2}]$ and $[-\overline{u}_{1},0]\times
\lbrack -\overline{u}_{2},0]$, respectively, satisfy 
\begin{equation}
u_{i,+}(x)\geq \underline{u}_{i}(x)\text{ \ and \ }u_{i,-}(x)\leq - 
\underline{u}_{i}(x),\text{ \ for a.e. }x\in \Omega .  \label{10}
\end{equation}
\end{theorem}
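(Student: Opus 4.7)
The plan is to apply the sub-supersolutions theorem for singular systems \cite[Theorem 2]{KM} to the ordered pair $(\underline{u}_1,\underline{u}_2),(\overline{u}_1,\overline{u}_2)$ from (\ref{7}) in the positive cone, and to the mirror pair $(-\overline{u}_1,-\overline{u}_2),(-\underline{u}_1,-\underline{u}_2)$ in the negative cone. The auxiliary functions $y_i,z_i$ in (\ref{4})--(\ref{5}) are engineered so that their $p_i$-Laplacians reproduce the distance-weighted behaviour prescribed by the two-sided bounds in $(\mathrm{H}_2)$--$(\mathrm{H}_3)$ on the common scale $d(x)$ through (\ref{6}); this is what makes it possible to fix a single constant $C>1$ that works for both the sub- and the supersolution inequalities simultaneously.

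To check that $(\overline{u}_1,\overline{u}_2)$ is a supersolution, one uses $(\mathrm{H}_2)$, the monotonicity of $s\mapsto s^{\gamma}$ and (\ref{6}) to obtain, for every $(s_1,s_2)\in[\underline{u}_1,\overline{u}_1]\times[\underline{u}_2,\overline{u}_2]$,
\begin{equation*}
f_i(x,s_1,s_2)\leq M_i\bigl(1+c^{|\alpha_i|}C^{|\alpha_i|}d(x)^{\alpha_i}+c^{|\beta_i|}C^{|\beta_i|}d(x)^{\beta_i}\bigr),
\end{equation*}
whereas $-\Delta_{p_i}\overline{u}_i=C^{p_i-1}\bigl(1+d(x)^{\alpha_i}+d(x)^{\beta_i}\bigr)$. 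The sign restrictions $-1<\alpha_1,\beta_2<0$ together with $|\beta_1|<p_1-1$ and $|\alpha_2|<p_2-1$ ensure that $p_i-1$ strictly dominates the exponents of $C$ appearing on the right, so a sufficiently large $C$ delivers the supersolution inequality termwise. Symmetrically, $(\mathrm{H}_3)$ bounds $f_i$ from below by a constant multiple of $d^{\hat\alpha_i}+d^{\hat\beta_i}$ on $\Omega\setminus\overline{\Omega}_\delta$, which is then compared with $-\Delta_{p_i}\underline{u}_i=C^{-(p_i-1)}(d^{\hat\alpha_i}+d^{\hat\beta_i})$; on the boundary strip $\Omega_\delta$ the subsolution inequality is trivial, since $(\mathrm{H}_1)$ makes $f_i$ blow up to $+\infty$ there whereas $-\Delta_{p_i}\underline{u}_i=-C^{-(p_i-1)}<0$.

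With the ordered pair at hand, \cite[Theorem 2]{KM} produces $(u_{1,+},u_{2,+})\in[\underline{u}_1,\overline{u}_1]\times[\underline{u}_2,\overline{u}_2]$ solving $(\mathrm{P})$; the negative solution is obtained by the symmetric construction, and $\mathcal{C}^{1,\tau}$-regularity follows from standard nonlinear regularity for the $p_i$-Laplacian with right-hand side controlled on the interior. For the strict inequality (\ref{9}), given any $\mathcal{C}^1$ positive solution within $[0,\overline{u}_1]\times[0,\overline{u}_2]$, the lower bound in $(\mathrm{H}_3)$ combined with the a priori estimate $u_{i,+}\leq\overline{u}_i\leq cCd$ and (\ref{5}) give $-\Delta_{p_i}u_{i,+}\geq-\Delta_{p_i}\underline{u}_i$ a.e.\ in $\Omega$, after enlarging $C$ if necessary. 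The weak comparison principle for the $p_i$-Laplacian then yields (\ref{10}), and V\'azquez's strong maximum principle together with the Hopf boundary lemma applied to $u_{i,+}-\underline{u}_i$ upgrades (\ref{10}) to the $\mathcal{C}^1$-strict inequality (\ref{9}); the negative case is identical by sign symmetry. The most delicate point is the simultaneous calibration of $C$ so that the exponents $\alpha_i,\beta_i,\hat\alpha_i,\hat\beta_i$ align with $p_i-1$ in both inequalities, precisely where the $p_i-1$ bounds in $(\mathrm{H}_2)$ and the condition $\hat\alpha_i+\hat\beta_i>-\min\{1,p_i-1\}$ in $(\mathrm{H}_3)$ enter nontrivially, and where the dichotomy between positive and negative values of $\hat\alpha_i,\hat\beta_i$ must be handled by using the upper bound on $u_{i,+}$.
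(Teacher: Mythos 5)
Your overall strategy coincides with the paper's: verify that $(\underline{u}_{1},\underline{u}_{2})=(C^{-1}z_{1},C^{-1}z_{2})$ and $(\overline{u}_{1},\overline{u}_{2})=(Cy_{1},Cy_{2})$ form an ordered sub-supersolution pair by comparing the bounds of $(\mathrm{H}_{2})$--$(\mathrm{H}_{3})$ with \eqref{4}--\eqref{6} on the scale $d(x)$, invoke \cite[Theorem 2]{KM}, argue by sign symmetry for the negative solution, and then use comparison principles for the localization \eqref{9}--\eqref{10}. The supersolution and subsolution verifications are essentially the paper's computations (note only that you do not need $(\mathrm{H}_{1})$ on $\Omega_{\delta}$ — it is not a hypothesis of Theorem \ref{T1}; the inequality $f_{i}\geq -C^{-(p_{i}-1)}$ there already follows from $(\mathrm{H}_{3})$, which makes $f_{i}>0$ for positive arguments).

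The genuine gap is in your derivation of the strict inequality \eqref{9}. You propose to establish $-\Delta_{p_{i}}u_{i,+}\geq -\Delta_{p_{i}}\underline{u}_{i}$ a.e., obtain \eqref{10} by weak comparison, and then ``upgrade'' to \eqref{9} by applying V\'azquez's strong maximum principle and the Hopf lemma to the difference $u_{i,+}-\underline{u}_{i}$. For $p_{i}\neq 2$ this step does not go through: $\Delta_{p_{i}}$ is not linear, so the inequality between $-\Delta_{p_{i}}u_{i,+}$ and $-\Delta_{p_{i}}\underline{u}_{i}$ does not yield a differential inequality for the difference to which the strong maximum principle could be applied, and the strong comparison principle for the $p$-Laplacian is known to fail without additional structure. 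What is actually needed — and what the paper supplies — is the strong comparison principle of Arcoya--Ruiz \cite[Proposition 2.6]{AR}, whose hypothesis is a \emph{strict} separation of the right-hand sides: one must exhibit, for each compact $\mathrm{K}\subset\subset\Omega$, a $\tau=\tau(\mathrm{K})>0$ with $\underline{\mathrm{X}}_{i}(x)+\tau<f_{i}(x,u_{1,+},u_{2,+})$ a.e.\ in $\Omega\cap\mathrm{K}$, where $\underline{\mathrm{X}}_{i}$ is the datum of \eqref{5} scaled by $C^{-(p_{i}-1)}$. Your estimates from $(\mathrm{H}_{3})$ together with $u_{i,+}\leq\overline{u}_{i}\leq cCd$ do in fact produce this strict gap for $C$ large (this is exactly the paper's computation), so the ingredient is available in your argument; but the conclusion must be drawn from the nonlinear strong comparison principle, not from the linear strong maximum principle applied to $u_{i,+}-\underline{u}_{i}$.
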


\begin{proof}
Let $(u_{1},u_{2})\in W_{0}^{1,p_{1}}(\Omega )\times W_{0}^{1,p_{2}}(\Omega
) $ within $[\underline{u}_{1},\overline{u}_{1}]\times \lbrack \underline{u}%
_{2},\overline{u}_{2}]$. Given that $\underline{u}_{i},\overline{u}_{i}\geq
0 $ in $\overline{\Omega }$, by (\ref{6}) and (\ref{7}) one has 
\begin{eqnarray*}
\overline{u}_{1}^{\alpha _{1}}+u_{2}^{\beta _{1}} &\leq &\left\{ 
\begin{array}{ll}
\overline{u}_{1}^{\alpha _{1}}+\overline{u}_{2}^{\beta _{1}} & \text{if }%
\beta _{1}>0 \\ 
\overline{u}_{1}^{\alpha _{1}}+\underline{u}_{2}^{\beta _{1}} & \text{if }%
\beta _{1}<0%
\end{array}%
\right. \leq \left\{ 
\begin{array}{ll}
(Cy_{1})^{\alpha _{1}}+(Cy_{2})^{\beta _{1}} & \text{if }\beta _{1}>0 \\ 
(Cy_{1})^{\alpha _{1}}+(C^{-1}z_{2})^{\beta _{1}} & \text{if }\beta _{1}<0%
\end{array}%
\right. \\
&\leq &\left\{ 
\begin{array}{ll}
(Cc^{-1}d(x))^{\alpha _{1}}+(Ccd(x))^{\beta _{1}} & \text{if }\beta _{1}>0
\\ 
(Cc^{-1}d(x))^{\alpha _{1}}+(C^{-1}c^{-1}d(x))^{\beta _{1}} & \text{if }%
\beta _{1}<0%
\end{array}%
\right. \\
&\leq &\left\{ 
\begin{array}{ll}
C^{\beta _{1}}\max \{c^{-\alpha _{1}},c^{\beta _{1}}\}(d(x)^{\alpha
_{1}}+d(x)^{\beta _{1}}) & \text{if }\beta _{1}>0 \\ 
C^{-\beta _{1}}\max \{c^{-\alpha _{1}},c^{-\beta _{1}}\}(d(x)^{\alpha
_{1}}+d(x)^{\beta _{1}}) & \text{if }\beta _{1}<0%
\end{array}%
\right. \\
&\leq &C^{|\beta _{1}|}c_{0}(d(x)^{\alpha _{1}}+d(x)^{\beta _{1}})\text{ in }%
\Omega ,
\end{eqnarray*}%
for certain positive constant $c_{0}:=c_{0}(c,\alpha _{1},\beta _{1})$.
Thus, using $(\mathrm{H}_{2})$, we deduce that 
\begin{eqnarray}
f_{1}(x,\overline{u}_{1},u_{2}) &\leq &M_{1}(1+\overline{u}_{1}^{\alpha
_{1}}+u_{2}^{\beta _{1}})  \label{11} \\
&\leq &M_{1}C^{|\beta _{1}|}(1+c_{0}(d(x)^{\alpha _{1}}+d(x)^{\beta _{1}}))%
\text{ in }\Omega .  \notag
\end{eqnarray}%
for $u_{2}\in \lbrack \underline{u}_{2},\overline{u}_{2}]$. Applying a
similar argument to the function $f_{2}(x,u_{1},\overline{u}_{2})$ we obtain 
\begin{eqnarray}
f_{2}(x,u_{1},\overline{u}_{2}) &\leq &M_{2}(1+u_{1}^{\alpha _{2}}+\overline{%
u}_{2}^{\beta _{2}}) \\
&\leq &M_{2}C^{|\alpha _{2}|}(1+c_{0}^{\prime }(d(x)^{\alpha
_{2}}+d(x)^{\beta _{2}}))\text{ in }\Omega ,  \notag  \label{12}
\end{eqnarray}%
for $u_{1}\in \lbrack \underline{u}_{1},\overline{u}_{1}],$ where $%
c_{0}^{\prime }>0$ is a constant. By (\ref{4}) and (\ref{7}) one has 
\begin{equation}
-\Delta _{p_{i}}\overline{u}_{i}=C^{p_{i}-1}(1+d(x)^{\alpha
_{i}}+d(x)^{\beta _{i}})\text{ \ in }\Omega ,\text{ }i=1,2.  \label{13}
\end{equation}%
Then, gathering (\ref{11})-(\ref{13}) together shows that $(\overline{u}_{1},%
\overline{u}_{2})$ is a supersolution for problem $(\mathrm{P})$, provided
that $C>1$ is large enough. From (\ref{6}) and (\ref{7}), we have 
\begin{eqnarray*}
\underline{u}_{1}^{\hat{\alpha}_{1}}+u_{2}^{\hat{\beta}_{1}} &\geq &\left\{ 
\begin{array}{ll}
\underline{u}_{1}^{\hat{\alpha}_{1}}+\underline{u}_{2}^{\hat{\beta}_{1}} & 
\text{if }\hat{\beta}_{1}>0 \\ 
\underline{u}_{1}^{\hat{\alpha}_{1}}+\overline{u}_{2}^{\hat{\beta}_{1}} & 
\text{if }\hat{\beta}_{1}<0%
\end{array}%
\right. \geq \left\{ 
\begin{array}{ll}
(C^{-1}z_{1})^{\hat{\alpha}_{1}}+(C^{-1}z_{2})^{\hat{\beta}_{1}} & \text{if }%
\hat{\beta}_{1}>0 \\ 
(C^{-1}z_{1})^{\hat{\alpha}_{1}}+(Cy_{2})^{\hat{\beta}_{1}} & \text{if }\hat{%
\beta}_{1}<0%
\end{array}%
\right. \\
&\geq &\left\{ 
\begin{array}{ll}
(C^{-1}cd(x))^{\hat{\alpha}_{1}}+(C^{-1}c^{-1}d(x))^{\hat{\beta}_{1}} & 
\text{if }\hat{\beta}_{1}>0 \\ 
(C^{-1}cd(x))^{\hat{\alpha}_{1}}+(Ccd(x))^{\hat{\beta}_{1}} & \text{if }\hat{%
\beta}_{1}<0%
\end{array}%
\right. \\
&\geq &\left\{ 
\begin{array}{ll}
C^{-\hat{\beta}_{1}}\min \{c^{\hat{\alpha}_{1}},c^{-\hat{\beta}_{1}}\}\text{ 
}(d(x)^{\hat{\alpha}_{1}}+d(x)^{\hat{\beta}_{1}}) & \text{if }\hat{\beta}%
_{1}>0 \\ 
C^{\hat{\beta}_{1}}\min \{c^{\hat{\alpha}_{1}},c^{\hat{\beta}_{1}}\}\text{ }%
(d(x)^{\hat{\alpha}_{1}}+d(x)^{\hat{\beta}_{1}}) & \text{if }\hat{\beta}%
_{1}<0%
\end{array}%
\right. \\
&\geq &C^{-|\beta _{1}|}c_{1}^{\prime }(d(x)^{\hat{\alpha}_{1}}+d(x)^{\hat{%
\beta}_{1}})\text{ in }\Omega ,
\end{eqnarray*}%
for a positive constant $c_{1}^{\prime }:=c_{1}(c,\hat{\alpha}_{1},\hat{\beta%
}_{1})$. On account of $(\mathrm{H}_{3})$ we infer that 
\begin{equation}
f_{1}(x,\underline{u}_{1},u_{2})\geq m_{1}(\underline{u}_{1}^{\hat{\alpha}%
_{1}}+u_{2}^{\hat{\beta}_{1}})\geq C^{-|\hat{\beta}_{1}|}c_{1}^{\prime
}(d(x)^{\hat{\alpha}_{1}}+d(x)^{\hat{\beta}_{1}})\text{ in }\Omega .
\label{14}
\end{equation}%
for $u_{2}\in \lbrack \underline{u}_{2},\overline{u}_{2}]$. A similar
argument applied to the function $f_{2}(x,u_{1},\overline{u}_{2})$ gives 
\begin{equation}
f_{2}(x,u_{1},\underline{u}_{2})\geq m_{2}(\underline{u}_{1}^{\hat{\alpha}%
_{2}}+u_{2}^{\hat{\beta}_{2}})\geq C^{-|\hat{\alpha}_{2}|}c_{0}^{\prime
}(d(x)^{\hat{\alpha}_{2}}+d(x)^{\hat{\beta}_{2}})\text{ in }\Omega .
\label{15}
\end{equation}%
for $u_{1}\in \lbrack \underline{u}_{1},\overline{u}_{1}],$ where $%
c_{1}^{\prime }>0$ is a constant depending on $c,\hat{\alpha}_{2}$ and $\hat{%
\beta}_{2}$. Then, bearing in mind from (\ref{4}) and (\ref{7}) that 
\begin{equation}
-\Delta _{p_{i}}\underline{u}_{i}=C^{-(p_{i}-1)}(d(x)^{\hat{\alpha}%
_{i}}+d(x)^{\hat{\beta}_{i}})\text{ \ in }\Omega ,\text{ for }i=1,2,
\label{16}
\end{equation}%
gathering (\ref{14})-(\ref{16}) together, for $C>1$ is large enough, we
conclude that $(\underline{u}_{1},\underline{u}_{2})$ is a subsolution for
problem $(\mathrm{P})$.

Consequently, thanks to \cite[Theorem 2]{KM}, there exists a positive
solution $(u_{1,+},u_{2,+})\in \mathcal{C}^{1,\tau }(\overline{\Omega }
)\times \mathcal{C}^{1,\tau }(\overline{\Omega }),$ $\tau \in (0,1),$ for
problem $(\mathrm{P})$ within $[\underline{u}_{1},\overline{u}_{1}]\times
\lbrack \underline{u}_{2},\overline{u}_{2}]$. Furthermore, by symmetry,
exploiting the growth conditions $(\mathrm{H}_{2})$ and $(\mathrm{H}_{3})$
when $s,t<0$, the existence of a negative solution $(u_{1,-},u_{2,-})\in 
\mathcal{C}^{1,\tau }(\overline{\Omega })\times \mathcal{C}^{1,\tau }( 
\overline{\Omega })$ for system $(\mathrm{P}),$ located in $[-\overline{u}
_{1},-\underline{u}_{1}]\times \lbrack -\overline{u}_{2},-\underline{u}
_{2}], $ is shown through a quite similar argument.

We are going to show the first inequality (\ref{9}), the second one follows
similarly. With this aim, let $(u_{1,+},u_{2,+})\in \lbrack 0,\overline{u}%
_{1}]\times ([0,\overline{u}_{2}]$ be a positive solution of $(\mathrm{P})$.
For $-(p_{1}-1)<\hat{\alpha}_{1}<0<\hat{\beta}_{1}$, observe that 
\begin{equation*}
\begin{array}{l}
C^{-(p_{1}-1)}(d(x)^{\hat{\alpha}_{1}}+d(x)^{\hat{\beta}%
_{1}})<C^{-(p_{1}-1)}(\delta ^{\hat{\alpha}_{1}}+d(x)^{\hat{\beta}_{1}}) \\ 
<(cC)^{\hat{\alpha}_{1}}\text{ }d(x)^{\hat{\alpha}_{1}}\text{ \ in }\Omega
\backslash \overline{\Omega }_{\delta },%
\end{array}%
\end{equation*}%
while, for $-(p_{1}-1)<\hat{\alpha}_{1},\hat{\beta}_{1}<0,$ we get 
\begin{equation*}
\begin{array}{l}
C^{-(p_{1}-1)}(d(x)^{\hat{\alpha}_{1}}+d(x)^{\hat{\beta}%
_{1}})<C^{-(p_{1}-1)}(\delta ^{\hat{\alpha}_{1}}+\delta ^{\hat{\beta}_{1}})
\\ 
<\min \{(cC)^{\hat{\alpha}_{1}},(cC)^{\hat{\beta}_{1}}\}\text{ }(d(x)^{\hat{%
\alpha}_{1}}+d(x)^{\hat{\beta}_{1}}),\text{ in }\Omega \backslash \overline{%
\Omega }_{\delta },%
\end{array}%
\end{equation*}%
provided $C>1$ large enough. From $(\mathrm{H}_{3})$ one has 
\begin{equation*}
f_{i}(x,u_{1,+},u_{2,+})\geq m_{i}(u_{1,+}^{\hat{\alpha}_{i}}+u_{2,+}^{\hat{%
\beta}_{i}})\text{ in }\Omega ,
\end{equation*}%
while, by (\ref{6}) and (\ref{7}), it holds 
\begin{eqnarray*}
u_{1,+}^{\hat{\alpha}_{1}}+u_{2,+}^{\hat{\beta}_{1}} &\geq &\left\{ 
\begin{array}{ll}
\overline{u}_{1}^{\hat{\alpha}_{1}} & \text{if }\hat{\beta}_{1}>0 \\ 
\overline{u}_{1}^{\hat{\alpha}_{1}}+\overline{u}_{2}^{\hat{\beta}_{1}} & 
\text{if }\hat{\beta}_{1}<0%
\end{array}%
\right. =\left\{ 
\begin{array}{ll}
(Cy_{1})^{\hat{\alpha}_{1}} & \text{if }\hat{\beta}_{1}>0 \\ 
(Cy_{1})^{\hat{\alpha}_{1}}+(Cy_{2})^{\hat{\beta}_{1}} & \text{if }\hat{\beta%
}_{1}<0%
\end{array}%
\right. \\
&\geq &\left\{ 
\begin{array}{ll}
(Ccd(x))^{\hat{\alpha}_{1}} & \text{if }\hat{\beta}_{1}>0 \\ 
(Ccd(x))^{\hat{\alpha}_{1}}+(Ccd(x))^{\hat{\beta}_{1}} & \text{if }\hat{\beta%
}_{1}<0%
\end{array}%
\right. \\
&\geq &\left\{ 
\begin{array}{ll}
(cC)^{\hat{\alpha}_{1}}\text{ }d(x)^{\hat{\alpha}_{1}} & \text{if }\hat{\beta%
}_{1}>0 \\ 
\min \{(cC)^{\hat{\alpha}_{1}},(cC)^{\hat{\beta}_{1}}\}\text{ }(d(x)^{\hat{%
\alpha}_{1}}+d(x)^{\hat{\beta}_{1}}) & \text{if }\hat{\beta}_{1}<0%
\end{array}%
\right. ,\text{ \ in }\Omega .
\end{eqnarray*}%
Set the function $\underline{\mathrm{X}}_{i}:\Omega \rightarrow \mathbb{R}$
given by 
\begin{equation*}
\underline{\mathrm{X}}_{i}(x):=C^{-(p_{i}-1)}\left\{ 
\begin{array}{ll}
d(x)^{\hat{\alpha}_{i}}+d(x)^{\hat{\beta}_{i}} & \text{in \ }\Omega
\backslash \overline{\Omega }_{\delta }, \\ 
-1 & \text{in \ }\Omega _{\delta },%
\end{array}%
\right.
\end{equation*}%
and note that $\underline{\mathrm{X}}_{i}\in L^{\infty }(\Omega )$ ($i=1,2$
). In view of (\ref{15}) and (\ref{16}), for each compact set $\mathrm{K}%
\subset \subset \Omega $, there is a constant $\tau =\tau (\mathrm{K})>0$
such that $\underline{\mathrm{X}}_{i}(x)+\tau <f{_{i}}({x,}u_{1,+},u_{2,+})$
a.e. in $\Omega \cap \mathrm{K},$ provided $C>1$ is sufficiently large. By
the strong comparison principle \cite[Proposition 2.6]{AR}, we infer that
property (\ref{9}) holds true. A quite similar argument shows the
corresponding property for negative solution. Repeating the same argument
and applying the weak comparison principle \cite[Proposition 2.6]{AR}, we
show the property (\ref{10}). This completes the proof.
\end{proof}

\begin{remark}
A careful inspection of the proof of Theorem \ref{T1} shows that the
constant $C>1$ in (\ref{7}) can be precisely estimated.
\end{remark}

\section{Nodal solutions}

\label{S4}

This section focuses on nodal solutions for problem $(\mathrm{P})$. In view
of Theorem \ref{T1}, both opposite constant-sign solutions $%
(u_{1,+},u_{2,+}) $ and $(u_{1,-},u_{2,-})$ of $(\mathrm{P})$ are in $]%
\underline{u}_{1},\overline{u}_{1}[\times ]\underline{u}_{2},\overline{u}%
_{2}[$ and $]-\overline{u}_{1},-\underline{u}_{1}[\times ]-\overline{u}_{2},-%
\underline{u}_{2}[$, respectively. Thus, any solution for problem $(\mathrm{P%
})$ in $[-\underline{u}_{1},\underline{u}_{1}]\times \lbrack -\underline{u}%
_{2},\underline{u}_{2}]$ is necessarily nodal. The main result is stated as
follows.

\begin{theorem}
\label{T2}Assume $(\mathrm{H}_{1})-(\mathrm{H}_{4})$ hold such that 
\begin{equation}
\alpha _{2}\geq \hat{\alpha}_{2}>0\text{ \ and }\ \beta _{1}\geq \hat{\beta}%
_{1}>0.  \label{17}
\end{equation}%
Then, system $(\mathrm{P})$ possesses nodal solutions $(u_{1}^{\ast
},u_{2}^{\ast })$ in $W_{0}^{1,p_{1}}(\Omega )\times W_{0}^{1,p_{2}}(\Omega
) $ where components $u_{1}^{\ast }$ and $u_{2}^{\ast }$ are nontrivial and
at least are not of the same constant sign.
\end{theorem}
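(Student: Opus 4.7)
The plan follows the three-step strategy hinted at in the introduction: regularize, apply Leray--Schauder degree, and pass to the limit. For $\varepsilon > 0$, I would truncate the nonlinearities to kill the singularity, e.g.\ by setting $f_i^{\varepsilon}(x, s_1, s_2) := f_i(x, T_\varepsilon(s_1), T_\varepsilon(s_2))$ where $T_\varepsilon$ freezes an argument at $\pm \varepsilon$ whenever its absolute value drops below $\varepsilon$. The resulting $f_i^{\varepsilon}$ is a bounded Carath\'eodory function that inherits suitable versions of the bounds $(\mathrm{H}_2)$ and $(\mathrm{H}_4)$ giving uniform-in-$\varepsilon$ a priori estimates, and the regularized problem $(\mathrm{P}^{\varepsilon})$ is non-singular, with solutions corresponding to fixed points of a completely continuous operator $\mathcal{T}_{\varepsilon}$ on $X := W_0^{1,p_1}(\Omega) \times W_0^{1,p_2}(\Omega)$.

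Next comes the degree computation on $X$. The sub-linear growth in $(\mathrm{H}_2)$ (where $|\alpha_i|, |\beta_i|$ are strictly less than $\min\{1, p_i - 1\}$) together with the Poincar\'e-type characterization (\ref{3}) of $\lambda_{1,p_i}$ confine every fixed point of $\mathcal{T}_{\varepsilon}$ to some large ball $B_R \subset X$, uniformly in $\varepsilon$. I would then construct two homotopies yielding $\deg(I - \mathcal{T}_{\varepsilon}, B_R, 0) = 0$ (by deforming to a fixed-point-free shift-of-identity, the shift chosen outside $B_R$ so that sub-linearity excludes fixed points along the deformation) and $\deg(I - \mathcal{T}_{\varepsilon}, B_R \setminus \overline{B_r}, 0) = 1$, where $B_r$ is a small ball enclosing the central rectangle $[-\underline{u}_1, \underline{u}_1] \times [-\underline{u}_2, \underline{u}_2]$ and separating it from the two opposite constant-sign rectangles. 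The second value would come from a homotopy to a sub-supersolution truncation operator whose only fixed points in the annular region are the constant-sign solutions $(u_{1,\pm}, u_{2,\pm})$ from Theorem \ref{T1}, isolated there by the strict separation (\ref{9}). Domain additivity of the Leray--Schauder degree then gives $\deg(I - \mathcal{T}_{\varepsilon}, B_r, 0) = 0 - 1 = -1$, producing a nontrivial solution $(u_1^{\varepsilon}, u_2^{\varepsilon}) \in B_r$ of $(\mathrm{P}^{\varepsilon})$.

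Finally, I would pass to the limit $\varepsilon \to 0$. Uniform $W_0^{1,p_i}$ and $L^\infty$ bounds promote to uniform $\mathcal{C}^{1,\tau}(\overline{\Omega})$ bounds via Lieberman-type regularity, so that a subsequence converges in $\mathcal{C}^1(\overline{\Omega}) \times \mathcal{C}^1(\overline{\Omega})$ to a limit $(u_1^{\ast}, u_2^{\ast})$. Here the hypothesis (\ref{17}) is essential: combined with $\alpha_i \ge \hat{\alpha}_i$ and $\beta_i \ge \hat{\beta}_i$ from $(\mathrm{H}_3)$, it forces $\alpha_2, \beta_1, \hat{\alpha}_2, \hat{\beta}_1 > 0$, so the only singular exponents left in the bounds on $f_1$ and $f_2$ are the diagonal ones $|s_i|^{\alpha_i}$ (with $-1 < \alpha_1, \beta_2 < 0$). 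These diagonal singularities are dominated uniformly in $\varepsilon$ by $d(x)^{\alpha_i}$ near $\partial\Omega$ via the specific truncation $T_\varepsilon$ together with the localization of $(u_1^{\varepsilon}, u_2^{\varepsilon})$ in the central rectangle; being integrable since $\alpha_i > -1$, dominated convergence applies and converts the weak form of $(\mathrm{P}^{\varepsilon})$ into that of $(\mathrm{P})$. Since $(u_1^{\ast}, u_2^{\ast}) \in [-\underline{u}_1, \underline{u}_1] \times [-\underline{u}_2, \underline{u}_2]$ by $\mathcal{C}^1$-convergence, the strict separation (\ref{9}) of Theorem \ref{T1} forces it to be distinct from both $(u_{1,\pm}, u_{2,\pm})$, and it is therefore nodal.

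The hardest step will be the joint design of the truncation $f_i^{\varepsilon}$ and the two homotopies, which must simultaneously (i) preserve the sub-supersolution structure so that the constant-sign solutions are the only fixed points of the deformed operator in the annulus $B_R \setminus \overline{B_r}$, (ii) admit uniform-in-$\varepsilon$ bounds on $f_i^{\varepsilon}$ permitting the limit passage near $\partial \Omega$ and near interior zeros of the limit, and (iii) produce the asymmetric degree values $0$ on $B_R$ and $1$ on the annulus. Control of the singular terms during the limit, made tractable by (\ref{17}) eliminating cross-variable singularities, is the companion technical subtlety.
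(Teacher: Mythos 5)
Your three-step architecture (regularize away from the singularity, compute the Leray--Schauder degree on a large ball and on the complement of a central region, use additivity to produce a solution in the central region, then pass to the limit using (\ref{17}) to kill the cross-variable singularities and invoke Theorem \ref{T1} to conclude nodality) is exactly the paper's. However, the two degree computations, which carry the whole argument, are not sound as described. For the value $0$ on the large ball you propose to deform to a ``shift of identity'' $u\mapsto u-c$ with $c$ outside $B_R$; but admissibility of the straight-line homotopy $u=(1-t)\mathcal{T}_{\varepsilon}(u)+tc$ on $\partial B_R$ does not follow from sublinearity (the a priori bound for the interpolated equation grows with $\|c\|$, so you cannot simultaneously have $c\notin B_R$ and all interpolated fixed points strictly inside $B_R$). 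The correct device, used in the paper, is to deform the right-hand side to the resonant problem $-\Delta_{p_i}u_i=\lambda_{1,p_i}(\mathcal{T}_{i,0}(u_i^{+}))^{p_i-1}+1$, which has \emph{no} solution at all, while solutions for $t>0$ obey a uniform $\mathcal{C}^1$ bound; nonexistence at the endpoint, not a far-away constant, is what forces the degree to vanish. For the value $1$ on the annular region you deform to an operator whose only fixed points there are the two constant-sign solutions $(u_{1,\pm},u_{2,\pm})$ and assert the degree is $1$ because they are isolated; but the degree would then be the sum of their two local indices, which you have not computed and which there is no reason to equal $1$. The paper instead deforms to the truncated eigenvalue problem $-\Delta_{p_i}u_i=(\tfrac23)^{p_i-1}\lambda_{1,p_i}|\hat\chi_{\phi_{1,p_i}}(u_i)|^{p_i-2}\hat\chi_{\phi_{1,p_i}}(u_i)$, whose \emph{unique} solution is $\phi_{1,p_i}$, located in the annulus by (\ref{8}), so the degree is genuinely $1$; admissibility requires the companion Proposition showing that constant-sign solutions of every interpolated problem satisfy the strict bounds $\underline{u}_i\ll\hat u_{i,+}$.

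A second, smaller gap: you take the central region to be a norm ball $B_r$ ``enclosing'' the rectangle $[-\underline{u}_1,\underline{u}_1]\times[-\underline{u}_2,\underline{u}_2]$. A norm ball containing the rectangle also contains points outside it, so a solution produced in $B_r$ need not lie in the rectangle, and your final appeal to (\ref{9}) (which only excludes constant-sign solutions \emph{from the rectangle}) would not apply; the paper avoids this by taking the central region to be the order interval itself, $\mathcal{B}_{\underline{u}}(0)=\{(u_1,u_2)\in\mathcal{B}_{R_\varepsilon}(0): -\underline{u}_i\le u_i\le\underline{u}_i\}$. Finally, in the limit passage the delicate point is not only the boundary behaviour $d(x)^{\alpha_i}$ but the interior zero set of $u_i^{\ast}$: the paper handles it by testing with $\chi_{\mu}(u_{i,n}^{\pm})\varphi_i$ for a cut-off $\chi_\mu$ supported near $\{|u_{i,n}|\le 2\mu\}$, showing the contribution of $\{|u_{i,n}|\le\mu\}$ vanishes as $\mu\to0$, and combining Fatou's lemma with the integrability $f_i(x,u_1^{\ast},u_2^{\ast})\varphi_i\in L^1(\Omega)$; your plan does not address this interior issue.
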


\begin{remark}
\label{R1}Solutions $(u_{1},u_{2})\in W_{0}^{1,p_{1}}(\Omega )\times
W_{0}^{1,p_{2}}(\Omega )$ of $(\mathrm{P})$ satisfy $u_{1}(x),u_{2}(x)\neq 0$%
\ for a.e.\ $x\in \Omega .$ Otherwise, if $u_{1}$ and $u_{2}$ vanish on a
set of positive measure, then $(\mathrm{P})$ and hypothesis $(\mathrm{H}%
_{1}) $ are contradictory.
\end{remark}

\subsection{The regularized system}

For all $\varepsilon \in (0,1),$ we state the auxiliary system%
\begin{equation*}
(\mathrm{P}^{\varepsilon })\qquad \left\{ 
\begin{array}{ll}
-\Delta _{p_{1}}u_{1}=f_{1}(x,u_{1}+\gamma _{\varepsilon
}(u_{1}),u_{2}+\gamma _{\varepsilon }(u_{2})) & \text{in }\Omega \\ 
-\Delta _{p_{2}}u_{2}=f_{2}(x,u_{1}+\gamma _{\varepsilon
}(u_{1}),u_{2}+\gamma _{\varepsilon }(u_{2})) & \text{in }\Omega \\ 
u_{1},u_{2}=0\text{ } & \text{on }\partial \Omega ,%
\end{array}
\right.
\end{equation*}%
where%
\begin{equation}
\gamma _{\varepsilon }(s)=\varepsilon \text{ }(\frac{1}{2}+sgn(s)),\;\forall
s\in \mathbb{\ 
%TCIMACRO{\U{211d} }%
%BeginExpansion
\mathbb{R}
%EndExpansion
}.  \label{18}
\end{equation}%
Our goal is to prove that $(\mathrm{P}^{\varepsilon })$ admits a solution $%
(u_{1,\varepsilon },u_{2,\varepsilon })$ within $[-\underline{u}_{1},%
\underline{u}_{1}]\times \lbrack -\underline{u}_{2},\underline{u}_{2}]$ and
then, passing to the limit as $\varepsilon \rightarrow 0$, to get the
existence of the desired solution $(u_{1}^{\ast },u_{2}^{\ast })$ for
problem $(\mathrm{P})$. The existence result regarding the regularized
system $(\mathrm{P}^{\varepsilon })$ is stated as follows.

\begin{theorem}
\label{T3} Assume that $(\mathrm{H}_{1})-(\mathrm{H}_{4})$ and (\ref{17})
hold. Then, the system $(\mathrm{P}^{\varepsilon })$ possesses solutions $%
(u_{1,\varepsilon },u_{2,\varepsilon })\in \mathcal{C}^{1,\tau }(\overline{%
\Omega })\times \mathcal{C}^{1,\tau }(\overline{\Omega })$ for some $\tau
\in (0,1)$ within $\left[ -\underline{u}_{1},\underline{u}_{1}\right] \times %
\left[ -\underline{u}_{2},\underline{u}_{2}\right] .$
\end{theorem}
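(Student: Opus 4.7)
The strategy is to recast $(\mathrm{P}^{\varepsilon})$ as a fixed-point equation in $X := W_0^{1,p_1}(\Omega) \times W_0^{1,p_2}(\Omega)$ and to apply the Leray--Schauder degree on two nested balls $B_r \subset B_R \subset X$, using additivity to produce a solution in the smaller ball $B_r$, which is chosen to enclose the rectangle $[-\underline{u}_1, \underline{u}_1] \times [-\underline{u}_2, \underline{u}_2]$.

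I would introduce the solution operator $T_\varepsilon : X \to X$ of $(\mathrm{P}^{\varepsilon})$ defined by $T_\varepsilon(u_1, u_2) = (v_1, v_2)$, where $v_i$ solves
\begin{equation*}
-\Delta_{p_i} v_i = f_i \bigl( x,\, \tau_1(u_1) + \gamma_\varepsilon(\tau_1(u_1)),\, \tau_2(u_2) + \gamma_\varepsilon(\tau_2(u_2)) \bigr) \text{ in } \Omega, \ v_i = 0 \text{ on } \partial \Omega,
\end{equation*}
with $\tau_i$ a cut-off at $\pm \underline{u}_i$. Since $|s + \gamma_\varepsilon(s)| \geq \varepsilon/2$, the singular behaviour of $(\mathrm{H}_1)$ is neutralised; combined with $(\mathrm{H}_4)$ and the truncation this keeps the right-hand side in $L^\infty(\Omega)$, and nonlinear regularity for the $p_i$-Laplacian makes $T_\varepsilon$ completely continuous on $X$. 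Any fixed point of $T_\varepsilon$ that lies inside the rectangle is a genuine $\mathcal{C}^{1,\tau}$ solution of $(\mathrm{P}^{\varepsilon})$.

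Testing the fixed-point equation with $u_i$ and invoking $(\mathrm{H}_2)$--$(\mathrm{H}_4)$, the exponent restrictions and Hardy's inequality yields a radius $R > 0$ with every fixed point of $T_\varepsilon$ contained in $B_R$; one then fixes $r \in (0, R)$ so that $B_r$ strictly contains $[-\underline{u}_1, \underline{u}_1] \times [-\underline{u}_2, \underline{u}_2]$. The two degree computations proceed through carefully chosen homotopies: for the outer ball, the deformation $H_1(t, u) = T_\varepsilon(u) + t w_0$ with $w_0 \in X$ of sufficient norm (and with sign dictated by $(\mathrm{H}_3)$) has no fixed point in $\overline{B_R}$ at $t = 1$, giving $\deg(I - T_\varepsilon, B_R, 0) = 0$; for the annular region $B_R \setminus \overline{B_r}$, a second homotopy connects $T_\varepsilon$ to a map whose fixed-point structure mirrors the two constant-sign branches of Theorem \ref{T1} (with index sum $1$), and (\ref{17}) together with the strong maximum principle prevents spurious fixed points on $\partial B_r$ and $\partial B_R$, producing $\deg(I - T_\varepsilon, B_R \setminus \overline{B_r}, 0) = 1$. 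Domain additivity then gives
\begin{equation*}
\deg(I - T_\varepsilon, B_r, 0) = \deg(I - T_\varepsilon, B_R, 0) - \deg(I - T_\varepsilon, B_R \setminus \overline{B_r}, 0) = -1,
\end{equation*}
and produces a fixed point $(u_{1,\varepsilon}, u_{2,\varepsilon}) \in B_r$.

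To finish, a comparison argument in the spirit of the second half of the proof of Theorem \ref{T1} --- using $(\mathrm{H}_3)$, (\ref{17}), the sub-supersolutions (\ref{7}), and the strong comparison principle \cite[Proposition 2.6]{AR} --- localises this fixed point inside $[-\underline{u}_1, \underline{u}_1] \times [-\underline{u}_2, \underline{u}_2]$, so the truncation $\tau_i$ is inactive and $(u_{1,\varepsilon}, u_{2,\varepsilon})$ solves $(\mathrm{P}^{\varepsilon})$; the $\mathcal{C}^{1,\tau}$-regularity then follows from the $L^\infty$-bound on the right-hand side. The main obstacle I anticipate is the second degree computation: designing a homotopy on $B_R \setminus \overline{B_r}$ whose terminal map has index exactly $1$ coming from the two constant-sign branches while remaining admissible on both $\partial B_r$ and $\partial B_R$ requires a delicate compatibility between $\gamma_\varepsilon$, the truncation, the exponent bounds of $(\mathrm{H}_2)$--$(\mathrm{H}_4)$ and the synchronisation (\ref{17}); keeping all estimates $\varepsilon$-uniform enough for the subsequent limit $\varepsilon \to 0$ in Theorem \ref{T2} is an additional source of care.
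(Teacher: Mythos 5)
Your overall architecture (two nested regions, degree $0$ on the large one, degree $1$ on the complement of the small one, additivity) is the same as the paper's, but two of the three load-bearing steps have genuine gaps. First, the localisation. You take the small region to be a metric ball $B_r$ that \emph{strictly contains} the rectangle $[-\underline{u}_1,\underline{u}_1]\times[-\underline{u}_2,\underline{u}_2]$, and you truncate the operator at $\pm\underline{u}_i$. A nonzero degree on $B_r$ then only gives a fixed point of the \emph{truncated} operator somewhere in $B_r$, and your concluding step --- ``a comparison argument \dots localises this fixed point inside the rectangle'' --- goes the wrong way: $\underline{u}_i$ is a \emph{sub}solution, so the strong comparison principle of \cite[Proposition 2.6]{AR} produces \emph{lower} bounds ($u_{i,+}\gg\underline{u}_i$), i.e.\ it pushes constant-sign solutions \emph{out of} the rectangle, and it gives nothing at all for a sign-changing candidate. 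There is no mechanism in your argument forcing the fixed point into the rectangle, hence no way to deactivate your truncation. The paper avoids this entirely by taking the small region to \emph{be} the order rectangle intersected with the large ball, $\mathcal{B}_{\underline{u}}(0)=\{(u_1,u_2)\in\mathcal{B}_{R_\varepsilon}(0):-\underline{u}_i\le u_i\le\underline{u}_i\}$, so that $\deg(\mathcal{H}_\varepsilon(1,\cdot,\cdot),\mathcal{B}_{\underline{u}}(0),0)=-1$ directly yields a solution inside the rectangle (where the $\overline{u}_i$-truncation is automatically inactive); the comparison principle is used only in Proposition \ref{P3} to show that constant-sign solutions of the second homotopy stay strictly \emph{outside} the rectangle, which is what makes that homotopy admissible on $\partial\mathcal{B}_{\underline{u}}(0)$.

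Second, the degree-$0$ computation on the large ball via $H_1(t,u)=T_\varepsilon(u)+tw_0$ does not work here. After the $\overline{u}_i$-truncation and the $\gamma_\varepsilon$-shift, the right-hand sides are uniformly bounded in $L^\infty$, so $T_\varepsilon$ maps everything into a fixed ball $B_M$; consequently $u\mapsto T_\varepsilon(u)+w_0$ always has a fixed point (Schauder) in $B_{M+\|w_0\|}$, and one cannot simultaneously keep the affine homotopy admissible on $\partial B_R$ for every $t$ and have nonexistence in $\overline{B_R}$ at $t=1$. Nonexistence must come from a structural obstruction, not from a norm argument: the paper deforms the reaction to $\lambda_{1,p_i}(\mathcal{T}_{i,0}(u_i^+))^{p_i-1}+1$, a problem at resonance with the principal eigenvalue plus a positive forcing, which has no solution by \cite[Proposition 9.64]{MMP2}, while the a priori bound (\ref{21}) holds along the whole homotopy. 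Relatedly, the degree-$1$ step that you flag as the ``main obstacle'' is resolved in the paper not by a map mirroring the two constant-sign branches, but by a single decoupled map built from the truncation $\hat{\chi}_{\phi_{1,p_i}}$ of the principal eigenfunction, whose unique zero is $\phi_{1,p_i}$ and which by (\ref{8}) lies in $\mathcal{B}_{R_\varepsilon}(0)\setminus\overline{\mathcal{B}}_{\underline{u}}(0)$. A final minor point: the paper runs the degree in $L^{p_1}(\Omega)\times L^{p_2}(\Omega)$, where compactness of $(-\Delta_{p_i})^{-1}$ is immediate, rather than in $W_0^{1,p_1}(\Omega)\times W_0^{1,p_2}(\Omega)$.
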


For any $R>0$, set%
\begin{equation*}
\mathcal{B}_{\underline{u}}(0)=\left\{ (u_{1},u_{2})\in \mathcal{B}%
_{R}(0)\,:-\underline{u}_{i}\leq u_{i}\leq \underline{u}_{i},\text{ }%
i=1,2\right\} ,
\end{equation*}%
where $\mathcal{B}_{R}(0)$ denotes the ball in $L^{p_{1}}(\Omega )\times
L^{p_{2}}(\Omega )$ centered at $0$ of radius $R>0$.

\subsubsection{\textbf{The degree on }$\mathcal{B}_{R_{\protect\varepsilon %
}}(0)$}

Bearing in mind (\ref{18}), we introduce the truncation 
\begin{equation}
\mathcal{T}_{i,\varepsilon }(u_{i}(x))=\gamma _{\varepsilon
}(u_{i}(x))+\left\{ 
\begin{array}{ll}
\overline{u}_{i}(x) & \text{if \ }u_{i}(x)\geq \overline{u}_{i}(x) \\ 
u_{i}(x) & \text{if \ }-\overline{u}(x)\leq u_{i}(x)\leq \overline{u}_{i}(x)
\\ 
-\overline{u}_{i}(x) & \text{if \ }u_{i}(x)\leq -\overline{u}_{i}(x)%
\end{array}%
\right. ,  \label{19}
\end{equation}%
\ for a.a. $x\in \overline{\Omega },$ for $i=1,2,$ all $\varepsilon \geq 0.$
From (\ref{18}) and (\ref{7}), we derive that%
\begin{equation}
\frac{\varepsilon }{2}\leq |\mathcal{T}_{i,\varepsilon }(u_{i})|\leq \frac{%
3\varepsilon }{2}+C||y_{i}||_{\infty },\text{ for }i=1,2.  \label{20}
\end{equation}%
In particular, for $\varepsilon =0$ in (\ref{19}), we have%
\begin{equation}
\mathcal{T}_{i,0}(u_{i}(x))=\left\{ 
\begin{array}{ll}
\overline{u}_{i}(x) & \text{if \ }u_{i}(x)\geq \overline{u}_{i}(x) \\ 
u_{i}(x) & \text{if \ }-\overline{u}(x)\leq u_{i}(x)\leq \overline{u}_{i}(x)
\\ 
-\overline{u}_{i}(x) & \text{if \ }u_{i}(x)\leq -\overline{u}_{i}(x)%
\end{array}%
\right.
\end{equation}%
and%
\begin{equation}
|\mathcal{T}_{i,0}(u_{i})|\leq C||y_{i}||_{\infty },\text{ for }i=1,2.
\end{equation}%
We shall study the homotopy class of problem%
\begin{equation*}
(\mathrm{P}_{t}^{\varepsilon })\qquad \left\{ 
\begin{array}{l}
-\Delta _{p_{i}}u_{i}=\mathrm{F}_{i,t}^{\varepsilon }({x,}u_{1},u_{2})\text{
in }\Omega , \\ 
u_{i}=0\text{ \ on }\partial \Omega ,\text{ \ }i=1,2,%
\end{array}%
\right.
\end{equation*}%
with%
\begin{equation*}
\mathrm{F}_{i,t}^{\varepsilon }(x,u_{1},u_{2}):=tf_{i}(x,\mathcal{T}%
_{1,\varepsilon }(u_{1}),\mathcal{T}_{2,\varepsilon }(u_{2}))+(1-t)(\lambda
_{1,p_{i}}(\mathcal{T}_{i,0}(u_{i}^{+}))^{p_{i}-1}+1),
\end{equation*}%
for $\varepsilon \in (0,1),$ for $t\in \lbrack 0,1],$ where $s^{+}:=\max
\{0,s\}$ and $s^{-}:=\max \{0,-s\},$ for all $s\in 
%TCIMACRO{\U{211d} }%
%BeginExpansion
\mathbb{R}
%EndExpansion
$. Note that in view of $(\mathrm{H}_{1}),$ any solution $(u_{1,\varepsilon
},u_{2,\varepsilon })\in W_{0}^{1,p_{1}}(\Omega )\times
W_{0}^{1,p_{2}}(\Omega )$ of $(\mathrm{P}_{t}^{\varepsilon })$ satisfies $%
u_{1,\varepsilon }(x),u_{2,\varepsilon }(x)\neq 0$ for a.e. $x\in \Omega $.
Hence, $\mathrm{F}_{i,t}^{\varepsilon }{(x},\cdot ,\cdot )$ is continuous
for a.e. $x\in \Omega ,$ for all $\varepsilon \in (0,1),$ $i=1,2$.

We recall from \cite[Proposition 9.64]{MMP2} that, for $t=0$ in $(\mathrm{P}%
_{t}^{\varepsilon })$, the decoupled system%
\begin{equation*}
(\mathrm{P}_{0}^{\varepsilon })\qquad \left\{ 
\begin{array}{l}
-\Delta _{p_{i}}u_{i}=\mathrm{F}_{i,0}^{\varepsilon }({x,}%
u_{1},u_{2})=\lambda _{1,p_{i}}(\mathcal{T}_{i,0}(u_{i}^{+}))^{p_{i}-1}+1%
\text{ in }\Omega \\ 
u_{i}=0\text{ \ on }\partial \Omega ,\text{ }i=1,2%
\end{array}%
\right.
\end{equation*}%
does not admit solutions $(u_{1},u_{2})$ in $W_{0}^{1,p_{1}}(\Omega )\times
W_{0}^{1,p_{2}}(\Omega )$. On account of $(\mathrm{H}_{4})$, (\ref{19}), (%
\ref{20}), we derive the estimate%
\begin{equation*}
\begin{array}{l}
|\mathrm{F}_{i,t}^{\varepsilon }({x,}u_{1},u_{2})|\leq |f_{i}(x,\mathcal{T}%
_{1,\varepsilon }(u_{1}),\mathcal{T}_{2,\varepsilon }(u_{2}))|+\lambda
_{1,p_{i}}(\mathcal{T}_{i,0}(u_{i}^{+}))^{p_{i}-1}+1 \\ 
\leq M_{i}(1+|\mathcal{T}_{1,\varepsilon }(u_{1})|^{\alpha _{i}}+|\mathcal{T}%
_{2,\varepsilon }(u_{2})|^{\beta _{i}})+\lambda
_{1,p_{i}}(u_{i}^{+})^{p_{i}-1}+1 \\ 
\leq C_{\varepsilon }(1+(u_{i}^{+})^{p_{i}-1}),\text{ for a.e. }x\in \Omega ,%
\text{ }i=1,2,%
\end{array}%
\end{equation*}%
for certain constant $C_{\varepsilon }>0$. Then, according to \cite[%
Corollary 8.13]{MMP2}, we conclude that each solution $(u_{1,\varepsilon
},u_{2,\varepsilon })$ of $(\mathrm{P}_{t}^{\varepsilon })$ belongs to $%
\mathcal{C}^{1}(\overline{\Omega })\times \mathcal{C}^{1}(\overline{\Omega }%
) $ and there exists a constant $R_{\varepsilon }>0$ such that 
\begin{equation}
\left\Vert u_{\varepsilon }\right\Vert _{\mathcal{C}^{1}(\overline{\Omega }%
)}<R_{\varepsilon },  \label{21}
\end{equation}%
for all $t\in (0,1]$ and $\varepsilon \in (0,1)$.

For every $\varepsilon \in (0,1)$, let us define the homotopy $\mathcal{H}%
_{\varepsilon }$ on $[0,1]\times \mathcal{B}_{R_{\varepsilon
}}(0)\rightarrow L^{p_{1}}(\Omega )\times L^{p_{2}}(\Omega )$ by%
\begin{equation*}
\mathcal{H}_{\varepsilon }(t,u_{1},u_{2})=I(u_{1},u_{2})-\left( 
\begin{array}{cc}
(-\Delta _{p_{1}})^{-1} & 0 \\ 
0 & (-\Delta _{p_{2}})^{-1}%
\end{array}%
\right) \left( 
\begin{array}{l}
\mathrm{F}_{1,t}^{\varepsilon }({x,}u_{1},u_{2}) \\ 
\multicolumn{1}{c}{\mathrm{F}_{2,t}^{\varepsilon }({x,}u_{1},u_{2})}%
\end{array}%
\right) .
\end{equation*}%
that is admissible for the Leray-Schauder topological degree by (\ref{21}),
the continuity of $\mathrm{F}{_{i,t}^{\varepsilon }(x},\cdot ,\cdot )$ for
a.e. $x\in \Omega $ and because the operator $(-\Delta _{p_{i}})^{-1}$ with
values in $L^{p_{i}}(\Omega )$ is compact.

Note that $(u_{1,\varepsilon },u_{2,\varepsilon })\in \mathcal{B}%
_{R_{\varepsilon }}(0)$ is a solution for $(\mathrm{P}^{\varepsilon })$ if,
and only if, 
\begin{equation*}
\begin{array}{c}
(u_{1,\varepsilon },u_{2,\varepsilon })\in \mathcal{B}_{R_{\varepsilon
}}(0)\,\,\,\mbox{and}\,\,\,\mathcal{H}_{\varepsilon }(1,u_{1,\varepsilon
},u_{2,\varepsilon })=0.%
\end{array}%
\end{equation*}%
The a priori estimate (\ref{21}) establishes expressly that solutions of $(%
\mathrm{P}_{t}^{\varepsilon })$ must lie in $\mathcal{B}_{R_{\varepsilon
}}(0)$, while the nonexistence of solutions to problem $(\mathrm{P}%
_{0}^{\varepsilon })$ yields 
\begin{equation*}
\deg \left( \mathcal{H}_{\varepsilon }(0,\cdot ,\cdot ),\mathcal{B}
_{R_{\varepsilon }}(0),0\right) =0,\text{ for all }\varepsilon \in (0,1).
\end{equation*}%
Consequently, the homotopy invariance property of the degree implies that 
\begin{equation}
\begin{array}{c}
\deg \left( \mathcal{H}_{\varepsilon }(1,\cdot ,\cdot ),\mathcal{B}
_{R_{\varepsilon }}(0),0\right) =0,\text{ for all }\varepsilon \in (0,1).%
\end{array}
\label{22}
\end{equation}

\subsubsection{\textbf{The degree on }$\mathcal{B}_{R_{\protect\varepsilon %
}}(0)\backslash \overline{\mathcal{B}}_{\protect\underline{u}}(0)$\textbf{.}}

We show that the degree of an operator corresponding to problem $(\mathrm{P}%
^{\varepsilon })$ is $1$ outside the ball $\mathcal{B}_{\underline{u}}(0)$.
To this end, let us define the problem%
\begin{equation*}
(\widehat{\mathrm{P}}_{t}^{\varepsilon })\qquad \left\{ 
\begin{array}{l}
-\Delta _{p_{i}}u_{i}=\widehat{\mathrm{F}}{_{i,t}^{\varepsilon }}({x,}%
u_{1},u_{2})\text{ in }\Omega \\ 
u_{i}=0\text{ \ on }\partial \Omega ,\text{ }i=1,2,%
\end{array}%
\right.
\end{equation*}%
for $t\in \lbrack 0,1]$ and $\varepsilon \in (0,1)$, where%
\begin{equation*}
\widehat{\mathrm{F}}{_{i,t}^{\varepsilon }}({x,}u_{1},u_{2}):=tf_{i}(x,%
\mathcal{T}_{1,\varepsilon }(u_{1}),\mathcal{T}_{2,\varepsilon
}(u_{2}))+(1-t)(\frac{2}{3})^{p_{i}-1}\lambda _{1,p_{i}}|\hat{\chi}_{\phi
_{1,p_{i}}}(u_{i})|^{p_{i}-2}\hat{\chi}_{\phi _{1,p_{i}}}(u_{i}),
\end{equation*}%
where $\hat{\chi}_{\phi _{1,p}}$ is a truncation defined by%
\begin{equation}
\hat{\chi}_{\phi _{1,p_{i}}}(s)=\left\{ 
\begin{array}{ll}
\frac{3}{2}s & \text{if }s\geq \phi _{1,p_{i}} \\ 
(\frac{1}{2}+sgn(s))\text{ }\phi _{1,p_{i}} & \text{if }-\phi _{1,p_{i}}\leq
s\leq \phi _{1,p_{i}} \\ 
\frac{1}{2}s & \text{if }s\leq -\phi _{1,p_{i}}.%
\end{array}%
\right. ,\text{ for }i=1,2.  \label{23}
\end{equation}%
Note from $(\mathrm{H}_{1})$ that every solution $(\hat{u}_{1,\varepsilon },%
\hat{u}_{2,\varepsilon })\in W_{0}^{1,p_{1}}(\Omega )\times
W_{0}^{1,p_{2}}(\Omega )$ of $(\widehat{\mathrm{P}}_{t}^{\varepsilon })$
satisfies $\hat{u}_{1,\varepsilon }(x),\hat{u}_{2,\varepsilon }(x)\neq 0$
for a.e. $x\in \Omega $. Therefore, this leads to conclude that $\mathrm{\ 
\tilde{F}}_{\varepsilon ,t}{(x},\cdot )$ is continuous for a.e. $x\in \Omega
,$ for all $\varepsilon \in (0,1)$.

We show that solutions of problem $(\widehat{\mathrm{P}}_{t}^{\varepsilon })$
cannot occur outside the ball $\mathcal{B}_{R_{\varepsilon }}(0)$.

\begin{proposition}
\label{P3} Assume that $(\mathrm{H}_{1})-(\mathrm{H}_{4})$ and (\ref{17})
are fulfilled.\ Then, any solution $(\hat{u}_{1},\hat{u}_{2})$ of $(\widehat{%
\mathrm{P}}_{t}^{\varepsilon })$ belongs to $\mathcal{C}^{1}(\overline{%
\Omega })\times \mathcal{C}^{1}(\overline{\Omega })$ and satisfy 
\begin{equation}
\left\Vert \hat{u}_{1}\right\Vert _{\mathcal{C}^{1}(\overline{\Omega }%
)},\left\Vert \hat{u}_{2}\right\Vert _{\mathcal{C}^{1}(\overline{\Omega }%
)}<R_{\varepsilon },  \label{24}
\end{equation}%
for $t\in \lbrack 0,1]$ and $\varepsilon \in (0,1)$. In addition, all
positive (resp. negative) solutions $(\hat{u}_{1,+},\hat{u}_{2,+})$ (resp. $(%
\hat{u}_{1,-},\hat{u}_{2,-})$) of $(\widehat{\mathrm{P}}_{t}^{\varepsilon })$
satisfy 
\begin{equation}
\begin{array}{c}
\underline{u}_{i}(x)\ll \hat{u}_{i,+}(x)\text{ \ (resp. }-\underline{u}%
_{i}(x)\gg \hat{u}_{i,-}(x)\text{)}\quad \forall x\in \Omega .%
\end{array}
\label{25}
\end{equation}
\end{proposition}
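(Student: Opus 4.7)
The plan is to address the uniform $\mathcal{C}^1$-bound (\ref{24}) and the strict comparisons (\ref{25}) separately: the first via nonlinear regularity, and the second via the strong comparison principle.

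For the $\mathcal{C}^1$-estimate (\ref{24}), I would adapt the derivation that yielded (\ref{21}). By (\ref{20}), $\mathcal{T}_{i,\varepsilon}(u_i)$ stays in a compact set bounded away from zero, so $(\mathrm{H}_4)$ produces a uniform $L^\infty$-bound for $f_i(x,\mathcal{T}_{1,\varepsilon}(u_1),\mathcal{T}_{2,\varepsilon}(u_2))$ independent of $t\in[0,1]$ and of $(u_1,u_2)$. The linear growth of $\hat{\chi}_{\phi_{1,p_i}}$ in (\ref{23}) yields $|\hat{\chi}_{\phi_{1,p_i}}(u_i)|^{p_i-1}\leq C(1+|u_i|^{p_i-1})$, so
\[
\bigl|\widehat{\mathrm{F}}_{i,t}^{\varepsilon}(x,u_1,u_2)\bigr|\leq C_\varepsilon\bigl(1+|u_i|^{p_i-1}\bigr), \quad \text{uniformly in } t\in[0,1].
\]
Applying \cite[Corollary 8.13]{MMP2} then provides the $\mathcal{C}^1$-bound, enlarging $R_\varepsilon$ from (\ref{21}) if necessary.

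For the strict comparison (\ref{25}) concerning positive solutions $(\hat u_{1,+},\hat u_{2,+})$, positivity of the components gives $\mathcal{T}_{i,\varepsilon}(\hat u_{i,+})\geq \varepsilon/2>0$ and $\hat{\chi}_{\phi_{1,p_i}}(\hat u_{i,+})>0$. A direct case analysis on (\ref{23}) shows that $(2/3)^{p_i-1}|\hat{\chi}_{\phi_{1,p_i}}(s)|^{p_i-2}\hat{\chi}_{\phi_{1,p_i}}(s)\geq \phi_{1,p_i}^{p_i-1}$ for every $s>0$. Combined with $(\mathrm{H}_3)$ this produces
\[
-\Delta_{p_i}\hat u_{i,+}\geq t\, m_i\bigl(\mathcal{T}_{1,\varepsilon}(\hat u_{1,+})^{\hat{\alpha}_i}+\mathcal{T}_{2,\varepsilon}(\hat u_{2,+})^{\hat{\beta}_i}\bigr)+(1-t)\lambda_{1,p_i}\phi_{1,p_i}^{p_i-1}\quad\text{in }\Omega,
\]
and the chain of estimates used for (\ref{14})--(\ref{16}) in Theorem \ref{T1}, after enlarging $C>1$ in (\ref{7}) if necessary, gives $-\Delta_{p_i}\hat u_{i,+}(x)\geq -\Delta_{p_i}\underline{u}_i(x)+\tau$ on every compact $\mathrm{K}\subset\subset\Omega$ with some $\tau=\tau(\mathrm{K})>0$ independent of $t\in[0,1]$. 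Combined with the common homogeneous Dirichlet condition, the strong comparison principle \cite[Proposition 2.6]{AR} then yields $\underline{u}_i\ll \hat u_{i,+}$. The inequality for negative solutions follows by the mirror argument, exploiting the symmetric upper estimate $(2/3)^{p_i-1}|\hat{\chi}_{\phi_{1,p_i}}(s)|^{p_i-2}\hat{\chi}_{\phi_{1,p_i}}(s)\leq -(1/3)^{p_i-1}\phi_{1,p_i}^{p_i-1}$ for $s<0$ together with the negative branch of $(\mathrm{H}_3)$.

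The main obstacle should be controlling the comparison uniformly in $t$: at $t=0$ the $f_i$ contribution vanishes and dominance of $-\Delta_{p_i}\underline{u}_i$ must come from $(1-t)\lambda_{1,p_i}\phi_{1,p_i}^{p_i-1}$ alone. This forces the constant $C>1$ in (\ref{7}) to be chosen possibly larger than in Theorem \ref{T1}, and one must verify that (\ref{8}) as well as the preceding sub/supersolution estimates remain compatible with this enlargement; alternatively, since $\lambda_{1,p_i}\phi_{1,p_i}^{p_i-1}=-\Delta_{p_i}\phi_{1,p_i}$, one can first obtain $\hat u_{i,+}\gg \phi_{1,p_i}$ by the strong comparison principle and then invoke (\ref{8}) to conclude.
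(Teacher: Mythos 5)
Your plan follows essentially the same route as the paper: the $\mathcal{C}^{1}$-bound comes from the growth estimate on $\widehat{\mathrm{F}}{_{i,t}^{\varepsilon }}$ together with the regularity result of \cite[Corollary 8.13]{MMP2}, and the strict inequalities come from a pointwise lower bound on $\widehat{\mathrm{F}}{_{i,t}^{\varepsilon }}$ (uniform in $t$ because it is a convex combination of two positive contributions) compared against $-\Delta _{p_{i}}\underline{u}_{i}$ via the strong comparison principle of \cite{AR}. The uniformity-in-$t$ issue you flag is already resolved by that convex-combination lower bound, so the alternative detour through $\phi _{1,p_{i}}$ is unnecessary (and would fail at $t=1$, where the $\hat{\chi}_{\phi _{1,p_{i}}}$ term disappears).
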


\begin{proof}
Let $(\hat{u}_{1},\hat{u}_{2})\in W_{0}^{1,p_{1}}(\Omega )\times
W_{0}^{1,p_{2}}(\Omega )$ be a solution of $(\widehat{\mathrm{P}}
_{t}^{\varepsilon })$. Using (\ref{20}) and for any $\theta \in 
%TCIMACRO{\U{211d} }%
%BeginExpansion
\mathbb{R}
%EndExpansion
$, observe that 
\begin{equation*}
|\mathcal{T}_{i,\varepsilon }(\hat{u}_{i})|^{\theta }\leq \left\{ 
\begin{array}{ll}
(\frac{\varepsilon }{2})^{\theta } & \text{if }\theta <0 \\ 
(\frac{3\varepsilon }{2}+C||y_{i}||_{\infty })^{\theta } & \text{if }\theta
>0%
\end{array}
\right. ,\text{ for }i=1,2.
\end{equation*}
Moreover, on account of (\ref{23}), one has 
\begin{equation*}
(\frac{2}{3})^{p_{i}-1}|\hat{\chi}_{\phi _{1,p_{i}}}(\hat{u}_{i})|^{p_{i}-2} 
\hat{\chi}_{\phi _{1,p_{i}}}(\hat{u}_{i})\leq (\max \{\hat{u}_{i},\phi
_{1,p_{i}}\})^{p_{i}-1}.
\end{equation*}
Hence, replacing $\theta $ with either $\alpha _{i}$ or $\beta _{i}$ ($i=1,2$
) depending on the given situation, we obtain 
\begin{equation*}
\begin{array}{l}
|\widehat{\mathrm{F}}{_{i,t}^{\varepsilon }}({x,}\hat{u}_{1},\hat{u}
_{2})|\leq C_{\varepsilon }+\lambda _{1,p_{i}}(\max \{\hat{u}_{i},\phi
_{1,p_{i}}\})^{p_{i}-1},\text{ for all }\varepsilon \in (0,1),%
\end{array}%
\end{equation*}
where $C_{\varepsilon }$ is certain positive constant. Then,\ the regularity
theory up to the boundary (see \cite[Corollary 8.13]{MMP2}) together with
the compact embedding $\mathcal{C}^{1,\tau }(\overline{\Omega })\subset 
\mathcal{C}^{1}(\overline{\Omega })$ entails the bound in (\ref{24}), for
all $\varepsilon \in (0,1)$.

Next, we proceed to show the inequalities (\ref{25}). Let $(\hat{u}_{1,+}, 
\hat{u}_{2,+})$ be a positive solution of $(\widehat{\mathrm{P}}
_{t}^{\varepsilon }).$ By $(\mathrm{H}_{3})$ and (\ref{2}), we get 
\begin{equation}
\begin{array}{l}
\widehat{\mathrm{F}}{_{i,t}^{\varepsilon }}({x,}\hat{u}_{1,+},\hat{u}
_{2,+})\geq tm_{i}(\mathcal{T}_{1,\varepsilon }(\hat{u}_{1,+})^{\hat{\alpha}
_{i}}+\mathcal{T}_{2,\varepsilon }(\hat{u}_{2,+})^{\hat{\beta}
_{i}})+(1-t)\lambda _{1,p_{i}}\phi _{1,p_{i}}^{p_{i}-1} \\ 
\geq tm_{i}(\mathcal{T}_{1,\varepsilon }(\hat{u}_{1,+})^{\hat{\alpha}_{i}}+ 
\mathcal{T}_{2,\varepsilon }(\hat{u}_{2,+})^{\hat{\beta}_{i}})+(1-t)(\frac{2 
}{3})^{p_{i}-1}\lambda _{1,p_{i}}(c_{0}^{-1}d(x))^{p_{i}-1} \\ 
\geq tm_{i}(\mathcal{T}_{1,\varepsilon }(\hat{u}_{1,+})^{\hat{\alpha}_{i}}+ 
\mathcal{T}_{2,\varepsilon }(\hat{u}_{2,+})^{\hat{\beta}_{i}})+(1-t)\lambda
_{1,p_{i}}(c_{0}^{-1}\delta )^{p_{i}-1}\text{\ in\ }\Omega \backslash 
\overline{\Omega }_{\delta }.%
\end{array}
\label{26}
\end{equation}
From (\ref{20}) and for any $\theta \in 
%TCIMACRO{\U{211d} }%
%BeginExpansion
\mathbb{R}
%EndExpansion
$, we have 
\begin{equation*}
|\mathcal{T}_{i,\varepsilon }(\hat{u}_{i,+})|^{\theta }\geq \left\{ 
\begin{array}{ll}
(\frac{3\varepsilon }{2}+C||y_{i}||_{\infty })^{\theta } & \text{if }\theta
<0 \\ 
(\frac{\varepsilon }{2})^{\theta } & \text{if }\theta >0%
\end{array}
\right. ,\text{ for }i=1,2.
\end{equation*}

Therefore, 
\begin{eqnarray*}
&&\mathcal{T}_{1,\varepsilon }(\hat{u}_{1,+})^{\hat{\alpha}_{i}}+\mathcal{T}
_{2,\varepsilon }(\hat{u}_{2,+})^{\hat{\beta}_{i}} \\
&\geq &\left\{ 
\begin{array}{ll}
(\frac{3\varepsilon }{2}+C||y_{i}||_{\infty })^{\hat{\alpha}_{i}} & \text{if 
}\hat{\alpha}_{i}<0 \\ 
(\frac{\varepsilon }{2})^{\hat{\alpha}_{i}} & \text{if }\hat{\alpha}_{i}>0%
\end{array}
\right. +\left\{ 
\begin{array}{ll}
(\frac{3\varepsilon }{2}+C||y_{i}||_{\infty })^{\hat{\beta}_{i}} & \text{if }
\hat{\beta}_{i}<0 \\ 
(\frac{\varepsilon }{2})^{\hat{\beta}_{i}} & \text{if }\hat{\beta}_{i}>0.%
\end{array}
\right.
\end{eqnarray*}
For $0>\hat{\alpha}_{1}>-(p_{1}-1)$ and $0<\hat{\beta}_{1}<p_{1}-1$ in $( 
\mathrm{H}_{3}),$ it readily seen that 
\begin{equation}
\begin{array}{c}
tm_{1}((\frac{3\varepsilon }{2}+C||y_{1}||_{\infty })^{\hat{\alpha}_{1}}+( 
\frac{\varepsilon }{2})^{\hat{\beta}_{1}})+(1-t)\lambda
_{1,p_{1}}(c_{0}^{-1}\delta )^{p_{1}-1} \\ 
>C^{-(p_{1}-1)}(\delta ^{\hat{\alpha}_{1}}+d(x)^{\hat{\beta}_{1}})\text{ in }
\Omega \backslash \overline{\Omega }_{\delta },%
\end{array}
\label{27}
\end{equation}
provided $C>1$ is large. The same inequality occurs in the corresponding
cases $0>\hat{\beta}_{2}>-(p_{2}-1)$ and $0<\hat{\alpha}_{2}<p_{2}-1.$ That
is, 
\begin{equation}
\begin{array}{c}
tm_{2}((\frac{\varepsilon }{2})^{\hat{\alpha}_{2}}+(\frac{3\varepsilon }{2}
+C||y_{2}||_{\infty })^{\hat{\beta}_{2}})+(1-t)\lambda
_{1,p_{2}}(c_{0}^{-1}\delta )^{p_{1}-1} \\ 
>C^{-(p_{2}-1)}(d(x)^{\hat{\alpha}_{2}}+\delta ^{\hat{\beta}_{2}})\text{ in }
\Omega \backslash \overline{\Omega }_{\delta }.%
\end{array}
\label{28}
\end{equation}
Then, by noting that 
\begin{equation}
d(x)^{\hat{\alpha}_{1}}+d(x)^{\hat{\beta}_{1}}\leq \delta ^{\hat{\alpha}
_{1}}+d(x)^{\hat{\beta}_{1}}  \label{29}
\end{equation}
and 
\begin{equation}
d(x)^{\hat{\alpha}_{2}}+d(x)^{\hat{\beta}_{2}}\leq d(x)^{\hat{\alpha}
_{2}}+\delta ^{\hat{\beta}_{2}},  \label{30}
\end{equation}
gathering (\ref{26})-(\ref{30}) together leads to 
\begin{equation}
\widehat{\mathrm{F}}{_{i,t}^{\varepsilon }}({x,}\hat{u}_{1,+},\hat{u}
_{2,+})>C^{-(p_{i}-1)}(d(x)^{\hat{\alpha}_{i}}+d(x)^{\hat{\beta}_{i}})\text{
\ in\ }\Omega \backslash \overline{\Omega }_{\delta }.  \label{31}
\end{equation}
In $\Omega _{\delta }$, it is readily apparent that 
\begin{equation}
\widehat{\mathrm{F}}{_{i,t}^{\varepsilon }}({x,}\hat{u}_{1,+},\hat{u}
_{2,+})>0>-C^{-(p_{i}-1)}\text{ in }\Omega _{\delta },  \label{32}
\end{equation}
for all $t\in \lbrack 0,1]$ and all $\varepsilon \in (0,1),i=1,2$.

Set the function $\underline{\mathrm{X}}_{i}:\Omega \rightarrow \mathbb{R}$
given by 
\begin{equation*}
\underline{\mathrm{X}}_{i}(x):=C^{-(p_{i}-1)}\left\{ 
\begin{array}{ll}
d(x)^{\hat{\alpha}_{i}}+d(x)^{\hat{\beta}_{i}} & \text{in \ }\Omega
\backslash \overline{\Omega }_{\delta }, \\ 
-1 & \text{in \ }\Omega _{\delta },%
\end{array}
\right.
\end{equation*}
and note that $\underline{\mathrm{X}}_{i}\in L^{\infty }(\Omega )$ ($i=1,2$
). In view of (\ref{31}) and (\ref{32}), for each compact set $\mathrm{K}
\subset \subset \Omega $, there is a constant $\tau =\tau (\mathrm{K})>0$
such that 
\begin{equation*}
\underline{\mathrm{X}}_{i}(x)+\tau <\widehat{\mathrm{F}}{_{i,t}^{\varepsilon
}}({x,}\hat{u}_{1,+},\hat{u}_{2,+})\text{ a.e. in }\Omega \cap \mathrm{K}.
\end{equation*}
Then, by (\ref{5}), (\ref{7}) and the strong comparison principle \cite[
Proposition $2.6$ and Remark $2.8$]{AR}, we infer that $\underline{u}
_{i}(x)\ll \hat{u}_{i,+}(x)$ in $\overline{\Omega }$. A quite similar
argument shows that $-\underline{u}_{i}(x)\gg \hat{u}_{i,-}(x)$ in $%
\overline{\Omega }$. This ends the proof.
\end{proof}

Let us define the homotopy $\mathcal{N}_{\varepsilon }$ on $[0,1]\times 
\mathcal{B}_{R_{\varepsilon }}(0)\backslash \overline{\mathcal{B}}_{%
\underline{u}}(0)\rightarrow L^{p_{1}}(\Omega )\times L^{p_{2}}(\Omega )$ by%
\begin{equation}
\mathcal{N}_{\varepsilon }(t,u_{1},u_{2})=I(u_{1},u_{2})-\left( 
\begin{array}{cc}
(-\Delta _{p_{1}})^{-1} & 0 \\ 
0 & (-\Delta _{p_{2}})^{-1}%
\end{array}%
\right) \left( 
\begin{array}{l}
\widehat{\mathrm{F}}{_{1,t}^{\varepsilon }}({x,}u_{1},u_{2}) \\ 
\multicolumn{1}{c}{\widehat{\mathrm{F}}{_{2,t}^{\varepsilon }}({x,}%
u_{1},u_{2})}%
\end{array}%
\right) .  \label{33}
\end{equation}%
for $t\in \lbrack 0,1]$ and $\varepsilon \in (0,1)$. Clearly, $\mathcal{N}%
_{\varepsilon }$ is well defined, compact and continuous a.e. in $\Omega $.
Moreover, $(u_{1},u_{2})\in \mathcal{B}_{R_{\varepsilon }}(0)\backslash 
\overline{\mathcal{B}}_{\underline{u}}(0)$ is a solution of system $(\mathrm{%
\ P}^{\varepsilon })$ if, and only if, 
\begin{equation*}
\begin{array}{c}
(u_{1},u_{2})\in \mathcal{B}_{R_{\varepsilon }}(0)\backslash \overline{%
\mathcal{B}}_{\underline{u}}(0)\,\,\,\mbox{and}\,\,\,\mathcal{N}%
_{\varepsilon }(1,u_{1},u_{2})=0.%
\end{array}%
\end{equation*}%
In view of (\ref{8}), $\phi _{1,p_{i}}\in \mathcal{B}_{R_{\varepsilon
}}(0)\backslash \overline{\mathcal{B}}_{\underline{u}}(0)$ which, by (\ref%
{23}) and (\ref{1}), is actually the unique solution of the problem 
\begin{equation*}
-\Delta _{p_{i}}u_{i}=(\frac{2}{3})^{p_{i}-1}\lambda _{1,p_{i}}|\hat{\chi}%
_{\phi _{1,p_{i}}}(u_{i})|^{p_{i}-2}\hat{\chi}_{\phi _{1,p_{i}}}(u_{i})\text{
in }\Omega ,\text{ }u_{i}=0\text{ on }\partial \Omega .
\end{equation*}%
Then, the homotopy invariance property of the degree gives 
\begin{equation}
\begin{array}{l}
\deg (\mathcal{N}_{\varepsilon }(1,\cdot ,\cdot ),\mathcal{B}%
_{R_{\varepsilon }}(0)\backslash \overline{\mathcal{B}}_{\underline{u}}(0),0)
\\ 
=\deg (\mathcal{N}_{\varepsilon }(0,\cdot ,\cdot ),\mathcal{B}%
_{R_{\varepsilon }}(0)\backslash \overline{\mathcal{B}}_{\underline{u}%
}(0),0)=1.%
\end{array}
\label{34}
\end{equation}%
Since 
\begin{equation*}
\mathcal{H}_{\varepsilon }(1,\cdot ,\cdot )=\mathcal{N}_{\varepsilon
}(1,\cdot ,\cdot )\,\,\,\text{in}\,\,\,\mathcal{B}_{R_{\varepsilon
}}(0)\backslash \overline{\mathcal{B}}_{\underline{u}}(0),\text{ for all }%
\varepsilon \in (0,1),
\end{equation*}%
we deduce that 
\begin{equation}
\begin{array}{c}
\deg (\mathcal{H}_{\varepsilon }(1,\cdot ,\cdot ),\mathcal{B}%
_{R_{\varepsilon }}(0)\backslash \overline{\mathcal{B}}_{\underline{u}%
}(0),0)=1.%
\end{array}
\label{35}
\end{equation}

\subsubsection{\textbf{Proof of Theorem \protect\ref{T3}.}}

Hereafter, we will assume that $\mathcal{H}_{\varepsilon
}(1,u_{1},u_{2})\not=0,$ for all $(u_{1},u_{2})\in \partial \mathcal{B}_{%
\underline{u}}(0),$ for all $\varepsilon \in (0,1).$ Otherwise, $%
(u_{1},u_{2})\in \partial \mathcal{B}_{\underline{u}}(0)$ would be a
solution of $(\mathrm{P}^{\varepsilon })$ within $\left[ -\underline{u}_{1},%
\underline{u}_{1}\right] \times \left[ -\underline{u}_{2},\underline{u}_{2}%
\right] $ and thus, Theorem \ref{T3} is proved.

By virtue of the domain additivity property of Leray-Schauder degree it
follows that 
\begin{equation*}
\begin{array}{c}
\deg (\mathcal{H}_{\varepsilon }(1,\cdot ,\cdot ),\mathcal{B}%
_{R_{\varepsilon }}(0)\backslash \overline{\mathcal{B}}_{\underline{u}%
}(0),0)+\deg (\mathcal{H}_{\varepsilon }(1,\cdot ,\cdot ),\mathcal{B}_{%
\underline{u}}(0),0) \\ 
=\deg (\mathcal{H}_{\varepsilon }(1,\cdot ,\cdot ),\mathcal{B}%
_{R_{\varepsilon }}(0),0).%
\end{array}%
\end{equation*}%
Hence, by (\ref{22}) and (\ref{35}), we deduce that $\deg (\mathcal{H}%
_{\varepsilon }(1,\cdot ,\cdot ),\mathcal{B}_{\underline{u}}(0),0)=-1,$
showing that problem $(\mathrm{P}^{\varepsilon })$ has a solution $%
(u_{1,\varepsilon },u_{2,\varepsilon })\in \mathcal{B}_{\underline{u}}(0),$
for all $\varepsilon \in (0,1)$. The nonlinear regularity theory \cite{L}
guarantees that $(u_{1,\varepsilon },u_{2,\varepsilon })\in \mathcal{C}%
^{1,\tau }(\overline{\Omega })\times \mathcal{C}^{1,\tau }(\overline{\Omega }%
)$ for certain $\tau \in (0,1)$.

\subsection{\textbf{Proof of Theorem \protect\ref{T2}.}}

Set $\varepsilon =\frac{1}{n}$ in $(\mathrm{P}^{\varepsilon })$ with any
positive integer $n\geq 1.$ According to Theorem \ref{T3}, there exists $%
(u_{1,n},u_{2,n}):=(u_{1,\frac{1}{n}},u_{2,\frac{1}{n}})\in \mathcal{C}%
^{1,\tau }(\overline{\Omega })\times \mathcal{C}^{1,\tau }(\overline{\Omega }%
)$ solution of $(\mathrm{P}^{n})$ ($(\mathrm{P}^{\varepsilon })$ with $%
\varepsilon =\frac{1}{n}$) such that 
\begin{equation}
u_{i,n}\in \lbrack -\underline{u}_{i},\underline{u}_{i}]  \label{36}
\end{equation}%
and 
\begin{equation}
\begin{array}{l}
\int_{\Omega }|\nabla u_{i,n}|^{p_{i}-2}\nabla u_{i,n}\text{\thinspace }%
\nabla \varphi _{i}\text{ }\mathrm{d}x \\ 
=\int_{\Omega }f_{i}(x,u_{1,n}+\gamma _{n}(u_{1,n}),u_{2,n}+\gamma
_{n}(u_{2,n}))\varphi _{i}\text{ }\mathrm{d}x%
\end{array}
\label{37}
\end{equation}%
for all $\varphi _{i}\in W_{0}^{1,p_{i}}(\Omega ),$ $i=1,2$, where $\gamma
_{n}(\cdot ):=\gamma _{\frac{1}{n}}(\cdot )$. Passing to relabeled
subsequences, the compact embedding $\mathcal{C}^{1,\tau }(\overline{\Omega }%
)\hookrightarrow \mathcal{C}^{1}(\overline{\Omega })$ entails the strong
convergence%
\begin{equation}
u_{i,n}\rightarrow u_{i}^{\ast }\;\text{\ in }\;\mathcal{C}^{1}(\overline{%
\Omega })  \label{38}
\end{equation}%
and therefore%
\begin{equation}
u_{i,n}\rightarrow u_{i}^{\ast }\;\text{\ in }\;W_{0}^{1,p_{i}}(\Omega ),
\label{39}
\end{equation}%
which, by Lebesgue's dominated convergence theorem, leads to%
\begin{equation}
\lim_{n\rightarrow +\infty }\int_{\Omega }|\nabla u_{i,n}|^{p_{i}-2}\nabla
u_{i,n}\text{\thinspace }\nabla \varphi _{i}\text{ }\mathrm{d}x=\int_{\Omega
}|\nabla u_{i}^{\ast }|^{p_{i}-2}\nabla u_{i}^{\ast }\text{\thinspace }%
\nabla \varphi _{i}\text{ }\mathrm{d}x,  \label{40}
\end{equation}%
for all $\varphi _{i}\in W_{0}^{1,p_{i}}(\Omega )$, $i=1,2.$ Moreover, Young
inequality implies that%
\begin{equation}
\int_{\Omega }|\nabla u_{i}^{\ast }|^{p_{i}-2}\nabla u_{i}^{\ast }\text{
\thinspace }\nabla \varphi _{i}\text{ }\mathrm{d}x\leq \frac{p_{i}-1}{p_{i}}%
\left\Vert \nabla u_{i}^{\ast }\right\Vert _{p_{i}}^{p_{i}}+\frac{1}{p_{i}}%
\left\Vert \varphi _{i}\right\Vert _{p_{i}}^{p_{i}}.  \label{41}
\end{equation}%
Next, we show that 
\begin{equation}
\begin{array}{l}
\lim_{n\rightarrow +\infty }\int_{\Omega }f_{i}(x,u_{1,n}+\gamma
_{n}(u_{1,n}),u_{2,n}+\gamma _{n}(u_{2,n}))\varphi _{i}\text{ }\mathrm{d}x
\\ 
=\int_{\Omega }f_{i}(x,u_{1}^{\ast },u_{2}^{\ast })\varphi _{i}\text{ }%
\mathrm{d}x,\text{ for }i=1,2,%
\end{array}
\label{42}
\end{equation}%
for all $\varphi _{i}\in W_{0}^{1,p_{i}}(\Omega ),$ for $i=1,2.$ Assume $%
\varphi _{i}\geq 0$ in $\Omega $, for $i=1,2$ and write%
\begin{equation*}
\int_{\Omega }f_{i}(x,u_{1}^{\ast },u_{2}^{\ast })\varphi _{i}\text{ }%
\mathrm{d}x=\int_{\Omega }f_{i}^{+}(x,u_{1}^{\ast },u_{2}^{\ast })\varphi
_{i}\text{ }\mathrm{d}x-\int_{\Omega }f_{i}^{-}(x,u_{1}^{\ast },u_{2}^{\ast
})\varphi _{i}\text{ }\mathrm{d}x.
\end{equation*}%
By Fatou's Lemma, along with (\ref{38}), given that $f_{i}(x,s,t)$ is a
Carath\'{e}odory function for $(x,s,t)\in \Omega \times (%
%TCIMACRO{\U{211d} }%
%BeginExpansion
\mathbb{R}
%EndExpansion
\backslash \{0\})^{2}$, one has%
\begin{equation*}
\begin{array}{l}
\int_{\Omega }f_{i}^{+}(x,u_{1}^{\ast },u_{2}^{\ast })\varphi _{i}\text{ }%
\mathrm{d}x \\ 
\leq \int_{\Omega }\lim_{n\rightarrow +\infty }\inf
(f_{i}^{+}(x,u_{1,n}+\gamma _{n}(u_{1,n}),u_{2,n}+\gamma
_{n}(u_{2,n}))\varphi _{i})\text{ }\mathrm{d}x \\ 
\leq \lim_{n\rightarrow +\infty }\inf \int_{\Omega
}f_{i}^{+}(x,u_{1,n}+\gamma _{n}(u_{1,n}),u_{2,n}+\gamma
_{n}(u_{2,n}))\varphi _{i}\text{ }\mathrm{d}x,%
\end{array}%
\end{equation*}%
as well as%
\begin{equation*}
\begin{array}{l}
\int_{\Omega }-f_{i}^{-}(x,u_{1}^{\ast },u_{2}^{\ast })\varphi _{i}\text{ }%
\mathrm{d}x \\ 
\leq \int_{\Omega }\lim_{n\rightarrow +\infty }\sup
(-f_{i}^{-}(x,u_{1,n}+\gamma _{n}(u_{1,n}),u_{2,n}+\gamma
_{n}(u_{2,n}))\varphi _{i})\text{ }\mathrm{d}x \\ 
\leq \lim_{n\rightarrow +\infty }\sup \int_{\Omega
}-f_{i}^{-}(x,u_{1,n}+\gamma _{n}(u_{1,n}),u_{2,n}+\gamma
_{n}(u_{2,n}))\varphi _{i}\text{ }\mathrm{d}x.%
\end{array}%
\end{equation*}%
Then, using (\ref{37}), (\ref{41}) and (\ref{40}), it follows that%
\begin{equation*}
\begin{array}{l}
\int_{\Omega }f_{i}(x,u_{1}^{\ast },u_{2}^{\ast })\varphi _{i}\text{ }%
\mathrm{d}x \\ 
\leq \lim_{n\rightarrow +\infty }\inf \int_{\Omega
}f_{i}^{+}(x,u_{1,n}+\gamma _{n}(u_{1,n}),u_{2,n}+\gamma
_{n}(u_{2,n}))\varphi _{i}\text{ }\mathrm{d}x \\ 
\text{ \ \ }+\lim_{n\rightarrow +\infty }\sup \int_{\Omega
}-f_{i}^{-}(x,u_{1,n}+\gamma _{n}(u_{1,n}),u_{2,n}+\gamma
_{n}(u_{2,n}))\varphi _{i}\text{ }\mathrm{d}x \\ 
\leq \lim_{n\rightarrow +\infty }\int_{\Omega }f_{i}^{+}(x,u_{1,n}+\gamma
_{n}(u_{1,n}),u_{2,n}+\gamma _{n}(u_{2,n}))\varphi _{i}\text{ }\mathrm{d}x
\\ 
\text{ \ \ }+\lim_{n\rightarrow +\infty }\int_{\Omega
}-f_{i}^{-}(x,u_{1,n}+\gamma _{n}(u_{1,n}),u_{2,n}+\gamma
_{n}(u_{2,n}))\varphi _{i}\text{ }\mathrm{d}x \\ 
=\lim_{n\rightarrow +\infty }\int_{\Omega }f_{i}(x,u_{1,n}+\gamma
_{n}(u_{1,n}),u_{2,n}+\gamma _{n}(u_{2,n}))\varphi _{i}\text{ }\mathrm{d}x
\\ 
\leq \frac{p_{i}-1}{p_{i}}\left\Vert \nabla u_{i}^{\ast }\right\Vert
_{p_{i}}^{p_{i}}+\frac{1}{p_{i}}\left\Vert \varphi _{i}\right\Vert
_{p_{i}}^{p_{i}},%
\end{array}%
\end{equation*}%
showing that%
\begin{equation}
f_{i}(x,u_{1}^{\ast },u_{2}^{\ast })\varphi _{i}\in L^{1}(\Omega ),
\label{43}
\end{equation}%
for all $\varphi _{i}\in W_{0}^{1,p_{i}}(\Omega )$ with $\varphi _{i}\geq 0$
in $\Omega ,$ $i=1,2.$

For a fixed $\mu >0$, we write 
\begin{equation}
\begin{array}{l}
\int_{\Omega }f_{i}(x,u_{1,n}+\gamma _{n}(u_{1,n})),u_{2,n}+\gamma
_{n}(u_{2,n}))\varphi _{i}\text{ }\mathrm{d}x \\ 
=\int_{\Omega \cap \{|u_{i,n}|\leq \mu \}}f_{i}(x,u_{1,n}+\gamma
_{n}(u_{1,n})),u_{2,n}+\gamma _{n}(u_{2,n}))\varphi _{i}\text{ }\mathrm{d}x
\\ 
+\int_{\Omega \cap \{|u_{i,n}|>\mu \}}f_{i}(x,u_{1,n}+\gamma
_{n}(u_{1,n})),u_{2,n}+\gamma _{n}(u_{2,n}))\varphi _{i}\text{ }\mathrm{d}x.%
\end{array}
\label{44}
\end{equation}%
Define the truncation $\chi _{\mu }:%
%TCIMACRO{\U{211d} }%
%BeginExpansion
\mathbb{R}
%EndExpansion
\rightarrow \lbrack 0,+\infty \lbrack $ by 
\begin{equation*}
\chi _{\mu }(s)=\left\{ 
\begin{array}{ll}
0 & \text{if }|s|\geq 2\mu , \\ 
2-sgn(s)\frac{s}{\mu } & \text{if }\mu \leq |s|\leq 2\mu , \\ 
1 & \text{if }|s|\leq \mu ,%
\end{array}
\right.
\end{equation*}%
Test in (\ref{37}) with $\chi _{\mu }(u_{i,n}^{+})\varphi _{i}\in
W_{0}^{1,p_{i}}(\Omega ),$ which is possible due to the continuity of
function $\chi _{\mu },$ reads as%
\begin{equation}
\begin{array}{l}
\int_{\Omega }|\nabla u_{i,n}|^{p_{i}-2}\nabla u_{i,n}\text{\thinspace }
\nabla (\chi _{\mu }(u_{i,n}^{+})\varphi _{i})\text{ }\mathrm{d}x \\ 
=\int_{\Omega }f_{i}(x,u_{1,n}+\gamma _{n}(u_{1,n})),u_{2,n}+\gamma
_{n}(u_{2,n}))\chi _{\mu }(u_{i,n}^{+})\varphi _{i}\text{ }\mathrm{d}x.%
\end{array}
\label{45}
\end{equation}%
By definition of $\chi _{\mu }$ we get 
\begin{equation}
\int_{\Omega }|\nabla u_{i,n}|^{p_{i}}\chi _{\mu }^{\prime
}(u_{i,n}^{+})\varphi _{i}\text{ }\mathrm{d}x=-\frac{1}{\mu }\int_{\Omega
}|\nabla u_{i,n}|^{p_{i}}\varphi _{i}\text{ }\mathrm{d}x.  \label{46}
\end{equation}%
Thence%
\begin{equation}
\int_{\Omega }|\nabla u_{i,n}|^{p_{i}-2}\nabla u_{i,n}\text{\thinspace }
\nabla (\chi _{\mu }(u_{i,n}^{+})\varphi _{i})\text{ }\mathrm{d}x\leq
\int_{\Omega }|\nabla u_{i,n}|^{p_{i}-2}\nabla u_{i,n}\text{\thinspace }
\nabla \varphi _{i}\text{ }\chi _{\mu }(u_{i,n}^{+})\text{ }\mathrm{d}x,
\label{47}
\end{equation}%
which, by (\ref{39}) together with Lebesgue's Theorem, gives%
\begin{equation}
\lim_{n\rightarrow +\infty }\int_{\Omega }|\nabla u_{i,n}|^{p_{i}-2}\nabla
u_{i,n}\text{\thinspace }\nabla \varphi _{i}\text{ }\chi _{\mu
}(u_{i,n}^{+}) \text{ }\mathrm{d}x\leq \int_{\Omega }|\nabla u_{i}^{\ast
}|^{p_{i}-2}\nabla u_{i}^{\ast }\text{\thinspace }\nabla \varphi _{i}\text{ }%
\chi _{\mu }(u_{i}^{\ast }{}^{+})\text{ }\mathrm{d}x.  \label{48}
\end{equation}%
Repeating the previous argument by testing in (\ref{37}) with $\chi _{\mu
}(-u_{i,n}^{-})\varphi _{i}\in W_{0}^{1,p_{i}}(\Omega )$, we get%
\begin{equation}
\lim_{n\rightarrow +\infty }\int_{\Omega }|\nabla u_{i,n}|^{p_{i}-2}\nabla
u_{i,n}\text{\thinspace }\nabla \varphi _{i}\text{ }\chi _{\mu
}(-u_{i,n}^{-})\text{ }\mathrm{d}x\leq \int_{\Omega }|\nabla u_{i}^{\ast
}|^{p_{i}-2}\nabla u_{i}^{\ast }\text{\thinspace }\nabla \varphi _{i}\text{ }
\chi _{\mu }(-u_{i}^{\ast }{}^{-})\text{ }\mathrm{d}x.  \label{49}
\end{equation}%
Note from the definition of $\chi _{\mu }(\cdot )$ that%
\begin{equation*}
\chi _{\mu }(-u_{i,n}^{-})+\chi _{\mu }(u_{i,n}^{+})=\chi _{\mu }(u_{i,n})
\end{equation*}%
and%
\begin{equation*}
\chi _{\mu }(-u_{i}^{\ast }{}^{-})+\chi _{\mu }(u_{i}^{\ast }{}^{+})=\chi
_{\mu }(u_{i}^{\ast }).
\end{equation*}%
Then, in view of (\ref{44}), (\ref{45})-(\ref{49}), and for $\varphi _{i}\in
W_{0}^{1,p_{i}}(\Omega ),$ $\varphi _{i}\geq 0$ in $\Omega $, we get%
\begin{eqnarray*}
&&\lim_{n\rightarrow +\infty }\int_{\Omega \cap \{|u_{i,n}|\leq \mu
\}}f_{i}(x,u_{1,n}+\gamma _{n}(u_{1,n})),u_{2,n}+\gamma
_{n}(u_{2,n}))\varphi _{i}\text{ }\mathrm{d}x \\
&=&\lim_{n\rightarrow +\infty }\int_{\Omega \cap \{|u_{i,n}|\leq \mu
\}}f_{i}(x,u_{1,n}+\gamma _{n}(u_{1,n})),u_{2,n}+\gamma
_{n}(u_{2,n}))\varphi _{i}\text{ }\chi _{\mu }(u_{i,n})\text{ }\mathrm{d}x \\
&\leq &\int_{\Omega }|\nabla u_{i}^{\ast }|^{p_{i}-2}\nabla u_{i}^{\ast } 
\text{\thinspace }\nabla \varphi _{i}\text{ }\chi _{\mu }(u_{i}^{\ast }) 
\text{ }\mathrm{d}x.
\end{eqnarray*}%
Since%
\begin{equation*}
|\nabla u_{i}^{\ast }|^{p_{i}-2}\nabla u_{i}^{\ast }\text{\thinspace }\nabla
u_{i}^{\ast }\text{ }\chi _{\mu }(u_{i}^{\ast })\rightarrow 0\text{ a.e. in }
\Omega ,\text{ as }\mu \rightarrow 0,
\end{equation*}%
Lebesgue's Theorem implies that 
\begin{equation}
\lim_{\mu \rightarrow 0}\lim_{n\rightarrow +\infty }\int_{\Omega \cap
\{|u_{i,n}|\leq \mu \}}f_{i}(x,u_{1,n}+\gamma _{n}(u_{1,n})),u_{2,n}+\gamma
_{n}(u_{2,n}))\varphi _{i}\text{ }\mathrm{d}x=0.  \label{50}
\end{equation}

On the other hand, noting that 
\begin{eqnarray*}
&&\int_{\Omega \cap \{|u_{i,n}|>\mu \}}f_{i}(x,u_{1,n}+\gamma
_{n}(u_{1,n})),u_{2,n}+\gamma _{n}(u_{2,n}))\varphi _{i}\text{ }\mathrm{d}x
\\
&=&\int_{\Omega }f_{i}(x,u_{1,n}+\gamma _{n}(u_{1,n})),u_{2,n}+\gamma
_{n}(u_{2,n}))\varphi _{i}\text{ }\mathbbm{1}_{\{|u_{i,n}|>\mu \}}\mathrm{d}x
\end{eqnarray*}%
and 
\begin{equation*}
\mathbbm{1}_{\{|u_{i,n}|>\mu \}}\rightarrow \mathbbm{1}_{\{|u_{i}^{\ast
}|>\mu \}}\text{ a.e. on }\{x\in \Omega _{i,+}:|u_{i,n}|\neq \mu \},\text{
for }i=1,2.
\end{equation*}%
By (\ref{38}), (\ref{43}), $(\mathrm{H}_{4})$ and (\ref{17}), together with
Lebesgue's Theorem, it follows that%
\begin{equation}
\begin{array}{l}
\lim_{n\rightarrow +\infty }\int_{\Omega \cap \{|u_{i,n}|>\mu
\}}f_{i}(x,u_{1,n}+\gamma _{n}(u_{1,n})),u_{2,n}+\gamma
_{n}(u_{2,n}))\varphi _{i}\text{ }\mathrm{d}x \\ 
=\int_{\Omega \cap \{|u_{i}^{\ast }|>\mu \}}f_{i}(x,u_{1}^{\ast
},u_{2}^{\ast })\varphi _{i}\text{ }\mathrm{d}x.%
\end{array}
\label{51}
\end{equation}%
From (\ref{43}) and the fact that%
\begin{equation*}
\mathbbm{1}_{\{|u_{i,n}|>\mu \}}\rightarrow \mathbbm{1}_{\{|u_{i}^{\ast
}|>0\}}\text{ a.e. in }\Omega \text{, as }\mu \rightarrow 0,
\end{equation*}%
because the set $\{x\in \Omega :|u_{i}^{\ast }(x)|=\mu \}$ is negligible, we
infer that%
\begin{equation}
\begin{array}{c}
\lim_{\mu \rightarrow 0}\int_{\Omega \cap \{|u_{i}^{\ast }|>\mu
\}}f_{i}(x,u_{1}^{\ast },u_{2}^{\ast })\varphi _{i}\text{ }\mathrm{d}%
x=\int_{\Omega \cap \{|u_{i}^{\ast }|>0\}}f_{i}(x,u_{1}^{\ast },u_{2}^{\ast
})\varphi _{i}\text{ }\mathrm{d}x \\ 
=\int_{\Omega }f_{i}(x,u_{1}^{\ast },u_{2}^{\ast })\varphi _{i}\text{ }%
\mathrm{d}x.%
\end{array}
\label{52}
\end{equation}%
Hence, gathering (\ref{44}), (\ref{50}) and (\ref{52}) together we deduce
that (\ref{42}) is fulfilled for all $\varphi _{i}\in W_{0}^{1,p_{i}}(\Omega
)$ with $\varphi _{i}\geq 0$ in $\Omega $.

Finally, writing $\varphi _{i}=\varphi _{i}^{+}-\varphi _{i}^{-}$ for $%
\varphi _{i}\in W_{0}^{1,p_{i}}(\Omega )$ and bearing in mind the linearity
property of (\ref{42}) in $\varphi _{i}$, we conclude that (\ref{42}) holds
for every $\varphi _{i}\in W_{0}^{1,p_{i}}(\Omega )$. Consequently, on
account of (\ref{40}) and (\ref{42}), we may pass to the limit in (\ref{37})
to conclude that $(u_{1}^{\ast },u_{2}^{\ast })\in W_{0}^{1,p_{1}}(\Omega
)\times W_{0}^{1,p_{2}}(\Omega )$ is a solution of problem $(\mathrm{P})$
within $[-\underline{u}_{1},\underline{u}_{1}]\times \lbrack -\underline{u}%
_{2},\underline{u}_{2}]$. Theorem \ref{T1} forces that $(u_{1}^{\ast
},u_{2}^{\ast })$ is nodal in the sens that the components $u_{1}^{\ast }$
and $u_{2}^{\ast }$ are nontrivial and at least are not of the same constant
sign. This completes the proof.

\subsection{Nodal solutions with precise sign information}

Nodal solutions for $(\mathrm{P})$ where both components are sign changing
are provided in the next theorem.

\begin{theorem}
\label{T4} Assume $(\mathrm{H}_{1})-(\mathrm{H}_{5})$\textrm{\ }and (\ref{17}
) are fulfilled. Then, problem $(\mathrm{P})$ admits a nodal solution $%
(u_{1}^{\ast },u_{2}^{\ast })$ in $W_{0}^{1,p_{1}}(\Omega )\times
W_{0}^{1,p_{2}}(\Omega ),$ where both components $u_{1}^{\ast }$ and $%
u_{2}^{\ast }$ are sign changing. Moreover, $u_{1}^{\ast }$ and $u_{2}^{\ast
}$ change sign simultaneously, that is, 
\begin{equation}
u_{1}^{\ast }u_{2}^{\ast }\geq 0.  \label{55}
\end{equation}
\end{theorem}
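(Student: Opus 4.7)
The starting point is Theorem \ref{T2}, which under $(\mathrm{H}_{1})$--$(\mathrm{H}_{4})$ and (\ref{17}) delivers a nodal solution $(u_{1}^{\ast }, u_{2}^{\ast }) \in W_{0}^{1,p_{1}}(\Omega) \times W_{0}^{1,p_{2}}(\Omega)$ of $(\mathrm{P})$ living inside $[-\underline{u}_{1}, \underline{u}_{1}] \times [-\underline{u}_{2}, \underline{u}_{2}]$, with components that are nontrivial and at least not both of the same constant sign. The task is to exploit the sign-coupling hypothesis $(\mathrm{H}_{5})$ to promote this into a solution with both components sign changing and satisfying $u_{1}^{\ast } u_{2}^{\ast } \geq 0$ a.e.

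First I would rule out that either component has constant sign. Suppose, for contradiction, that $u_{1}^{\ast } \geq 0$ a.e.\ in $\Omega$; by Remark \ref{R1}, in fact $u_{1}^{\ast } > 0$ a.e. Hypothesis $(\mathrm{H}_{5})$ then forces $f_{2}(x, u_{1}^{\ast }, u_{2}^{\ast }) > 0$ a.e., so $-\Delta_{p_{2}} u_{2}^{\ast } > 0$ in $\Omega$ with vanishing boundary data, and the weak comparison principle yields $u_{2}^{\ast } > 0$ a.e. But then $(u_{1}^{\ast }, u_{2}^{\ast })$ is a positive solution lying in $[0, \overline{u}_{1}] \times [0, \overline{u}_{2}]$ with $u_{i}^{\ast } \leq \underline{u}_{i}$, which contradicts $u_{i,+} \gg \underline{u}_{i}$ in (\ref{9}) of Theorem \ref{T1} once the standard $C^{1,\tau }$ regularity upgrade is invoked. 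The three analogous sign combinations are excluded by symmetric arguments, so both $u_{1}^{\ast }$ and $u_{2}^{\ast }$ are genuinely sign changing.

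The core of the proof is (\ref{55}). Set $\Omega_{+-} := \{x \in \Omega : u_{1}^{\ast }(x) > 0,\, u_{2}^{\ast }(x) < 0\}$ and $\Omega_{-+} := \{x \in \Omega : u_{1}^{\ast }(x) < 0,\, u_{2}^{\ast }(x) > 0\}$; by Remark \ref{R1}, (\ref{55}) reduces to $|\Omega_{+-}| = |\Omega_{-+}| = 0$. I would test the first equation of $(\mathrm{P})$ with $(u_{1}^{\ast })^{+}$ and $(u_{1}^{\ast })^{-}$, and the second with $(u_{2}^{\ast })^{+}$ and $(u_{2}^{\ast })^{-}$, splitting each integral according to the sign of the companion component and reading off the sign of every $f_{i}$ from $(\mathrm{H}_{5})$. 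On the mixed-sign sets $\Omega_{+-}$ and $\Omega_{-+}$ the reactions carry the ``wrong'' sign, so their contributions enter on the same side as the gradient energies and must be balanced against those from $\Omega_{++}$ and $\Omega_{--}$. Combining the four identities and estimating each side through the growth bound $(\mathrm{H}_{4})$ and the uniform localization inside $[-\underline{u}_{1}, \underline{u}_{1}] \times [-\underline{u}_{2}, \underline{u}_{2}]$ should force $|\Omega_{+-}| = |\Omega_{-+}| = 0$.

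The main obstacle I foresee is precisely this final closure: the four test-function identities do not by themselves contradict a positive-measure mixed-sign region, since the $p_{i}$-energies of $(u_{i}^{\ast })^{\pm }$ are a priori uncontrolled and the $p_{1}$- and $p_{2}$-scales do not couple cleanly. My fallback is to perform the sign analysis first at the regularized level. Since $u_{i,\varepsilon } + \gamma_{\varepsilon }(u_{i,\varepsilon })$ preserves the sign of $u_{i,\varepsilon }$, hypothesis $(\mathrm{H}_{5})$ transfers verbatim to $(\mathrm{P}^{\varepsilon })$, and the test-function computations become non-singular with the uniform $C^{1}$ bounds from Theorem \ref{T3}. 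A contradiction at level $\varepsilon > 0$ on the regularized mixed-sign region, passed to the limit through the strong $C^{1}$ convergence (\ref{38}), then transfers $u_{1}^{\ast } u_{2}^{\ast } \geq 0$ to the original problem.
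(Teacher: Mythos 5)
Your first step --- excluding the possibility that either component keeps a constant sign --- is sound and is essentially the paper's own argument in a slightly different dress: the paper assumes, say, $u_{1}^{\ast }<0<u_{2}^{\ast }$ and tests the first equation with $-u_{1}^{\ast -}$, so that $(\mathrm{H}_{5})$ makes the right-hand side strictly negative against the nonnegative energy $\int_{\Omega }|\nabla u_{1}^{\ast -}|^{p_{1}}\,\mathrm{d}x$; your variant (propagate the sign of one component to the other via $(\mathrm{H}_{5})$ and a comparison principle, then contradict the localization $u_{i}^{\ast }\leq \underline{u}_{i}$ against (\ref{9})) is an acceptable substitute and in fact spells out the mixed case (one component of constant sign, the other sign-changing) that the paper leaves implicit.

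The genuine gap is in the proof of (\ref{55}), and you have diagnosed it yourself: the identities obtained by testing with $(u_{i}^{\ast })^{\pm }$ do not control the mixed-sign sets $\Omega _{+-}$ and $\Omega _{-+}$, because $(u_{1}^{\ast })^{-}$, say, is supported on all of $\{u_{1}^{\ast }<0\}$ while the sign of $f_{1}$ there flips with $u_{2}^{\ast }$, so the contributions of $\Omega _{--}$ and $\Omega _{-+}$ enter with opposite signs and nothing forces the unfavourable one to dominate. Your fallback does not repair this: regularization removes the singularity but not this structural indefiniteness, since $\gamma _{\varepsilon }$ preserves signs, $(\mathrm{H}_{5})$ transfers unchanged, and the test functions $(u_{i,\varepsilon })^{\pm }$ are still not localized to regions where the companion component has a definite sign; uniform $\mathcal{C}^{1}$ bounds give no contradiction with a positive-measure mixed-sign region. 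So as written the proposal does not establish $u_{1}^{\ast }u_{2}^{\ast }\geq 0$. For comparison, the paper does not attempt any measure estimate of this kind: after excluding the two opposite-constant-sign configurations by the test-function computation, it concludes directly, invoking Theorem \ref{T1} and the localization of $(u_{1}^{\ast },u_{2}^{\ast })$ in $[-\underline{u}_{1},\underline{u}_{1}]\times \lbrack -\underline{u}_{2},\underline{u}_{2}]$, that the components change sign simultaneously --- a qualitative step rather than a quantitative one. To close your route you would need a test function genuinely supported where the companion component keeps one sign (some truncation of $u_{1}^{\ast -}$ depending on $u_{2}^{\ast }$), which is precisely the localization your four plain identities lack.
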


\begin{proof}
Recall from Theorem \ref{T2} that problem $(\mathrm{P})$ admits nodal
solutions $(u_{1}^{\ast },u_{2}^{\ast })$ in $W_{0}^{1,p_{1}}(\Omega )\times
W_{0}^{1,p_{2}}(\Omega )$ where the nontrivial components $u_{1}^{\ast }$
and $u_{2}^{\ast }$ are not of the same constant sign. Assume that $%
u_{1}^{\ast }<0<u_{2}^{\ast }$. Test the first equation in $(\mathrm{P})$ by 
$-u_{1}^{\ast }{}^{-}$, in view of $(\mathrm{H}_{5})$, we get 
\begin{equation*}
\int_{\Omega }|\nabla u_{1}^{\ast -}|^{p_{1}}\text{ }\mathrm{d}%
x=-\int_{\Omega }f_{1}(x,u_{1}^{\ast },u_{2}^{\ast })u_{1}^{\ast }{}^{-}%
\text{ }\mathrm{d}x<0,
\end{equation*}%
which forces $u_{1}^{\ast -}=0$, a contradiction. So assume $u_{2}^{\ast
}<0<u_{1}^{\ast }$. Test the second equation in $(\mathrm{P})$ by $%
-u_{2}^{\ast }{}^{-}$, using $(\mathrm{H}_{5})$ it follows that 
\begin{equation*}
\int_{\Omega }|\nabla u_{2}^{\ast -}|^{p_{2}}\text{ }\mathrm{d}%
x=-\int_{\Omega }f_{2}(x,u_{1}^{\ast },u_{2}^{\ast })u_{2}^{\ast }{}^{-}%
\text{ }\mathrm{d}x<0.
\end{equation*}%
Hence, $u_{2}^{\ast -}=0$, a contradiction. Consequently, $u_{1}^{\ast }$
and $u_{2}^{\ast }$ cannot be of opposite constant sign. However,
considering Theorem \ref{T1} we can conclude that $u_{1}^{\ast }$ and $%
u_{2}^{\ast }$ must change sign simultaneously and therefore, (\ref{55}) is
satisfied. This completes the proof.
\end{proof}

Finally, we indicate an example showing the applicability of Theorems \ref%
{T1}, \ref{T2} and \ref{T4}.

\begin{example}
Consider the functions $f_{1},f_{2}:\Omega \times 
%TCIMACRO{\U{211d} }%
%BeginExpansion
\mathbb{R}
%EndExpansion
^{2}\rightarrow 
%TCIMACRO{\U{211d} }%
%BeginExpansion
\mathbb{R}
%EndExpansion
$ defined by 
\begin{equation*}
f_{1}(x,s,t)=(\frac{1}{2}+sgn(t))(|s|^{\alpha _{1}}+|t|^{\beta _{1}})\text{
\ and \ }f_{2}(x,s,t)=(\frac{1}{2}+sgn(s))(|s|^{\alpha _{2}}+|t|^{\beta
_{2}}),
\end{equation*}%
where $sgn(s),sgn(t)$ denote the sign functions, the exponents satisfy $%
-1<\alpha _{1},\beta _{2}<0<\alpha _{2},\beta _{1}$ such that 
\begin{equation*}
\alpha _{2}<\min \left\{ 1,p_{2}-1\right\} ,\text{ \ }\beta _{1}<\min
\left\{ 1,p_{1}-1\right\} 
\end{equation*}%
\begin{equation*}
\text{and \ \ }\alpha _{i}+\beta _{i}>-\min \left\{ 1,p_{i}-1\right\} ,\text{
}i=1,2.
\end{equation*}

It is straightforward to check that assumptions $(\mathrm{H}_{1})-(\mathrm{H}%
_{5})$ are verified. Consequently, Theorems \ref{T1}, \ref{T2} and \ref{T4}
are applicable providing solutions for system $(\mathrm{P})$ with equations
whose right-hand sides are given through the preceding functions $f_{1}$ and 
$f_{2}$. One obtains three nontrivial solutions having sign properties as
described in Theorems \ref{T1} and \ref{T4}. Note that by changing the sign
dependence of the above functions, assuming that $f_{1}$ (resp. $f_{2}$) is
related with $sgn(s)$ (resp. $sgn(t)$), problem $(\mathrm{P})$ is a
decoupled-sign system. In this case, three nontrivial solutions are provided
by Theorems \ref{T1} and \ref{T2}.
\end{example}

\end{document}